\numberwithin{equation}{section}
\theoremstyle{plain}
\newtheorem{theorem}{Theorem}[section]
\newtheorem{lemma}{Lemma}[section]
\newtheorem{proposition}{Proposition}[section]
\theoremstyle{definition}
\newtheorem{definition}{Definition}[section]
\theoremstyle{remark}
\newtheorem{remark}{Remark}[section]
\newcommand{\helen}[1]{{\color{black}#1}}
\newcommand{\elaine}[1]{{\color{black}#1}}
\def\<{\langle}
\def\>{\rangle}
\def\Z{\mathbb{Z}}
\title{A One-Dimensional Symmetric Force-Based Blending Method for Atomistic-to-Continuum Coupling}
\author{
{Elaine Gorom-Alexander\footnote{Department of Mathematics and Statistics, University of North Carolina at Charlotte, Email: egorom@uncc.edu }} \and {Xingjie Helen Li\footnote{Department of Mathematics and Statistics, University of North Carolina at Charlotte, Email: xli47@uncc.edu}}
}
\begin{document}

\maketitle

\begin{abstract} Inspired by the blending method developed by [P. Seleson, S. Beneddine, and S. Prudhome, \emph{A Force-Based Coupling Scheme for Peridynamics and Classical Elasticity}, (2013)] for the nonlocal-to-local coupling, we create a symmetric and consistent blended force-based Atomistic-to-Continuum (a/c) scheme for the atomistic chain in one-dimensional space.
The conditions for the well-posedness of the underlying model are established by analyzing an optimal blending size and blending type to ensure the $H^1$ semi-norm stability for the blended force-based operator. We present several numerical experiments to test and confirm the theoretical findings.
\end{abstract}

\textbf{Keywords:}
 atomistic-to-continuum coupling, symmetric force-based blending, stability and blending size analysis
\section{Introduction}

Many important materials from airplane wings to computer chips can be improved by a better
understanding of failure modes such as fracture and fatigue. Thus, one of the most important
goals of computational materials science is to efficiently and reliably predict phenomena such as
crack growth and to facilitate the design of new materials better able to resist failure. Scientists and engineers have proposed several multi-scale methods (e.g., \cite{phillips2001,tadmor2011,WeinanEbook2011,Hou2012,Du2012a,Kondov2013,Silling2000,seleson2013aa,Seleson2015a,Ortizbook2016}) to overcome the computational challenges in fidelity and efficiency.

The two main strategies in multi-scale modelings are: (1) bottom-up atomistic-to-continuum:
coarse-graining of microscopic descriptions (e.g., atomistic models) of material behavior \cite{Tadmor2009a,tadmor2011,WeinanEbook2011,Shapeev2011,Luskin2013a}; (2) top-down local-to-nonlocal: informing macroscopic models (e.g., continuum equations) with physics gleaned from the microscopic scales \cite{Silling2000,Du2012a,seleson2013aa,Seleson2015a,NewsDu,Delia2014a,Delia2015a,Ortizbook2016,d2019review,YueYu2019,YueYu2020}.
The former provides a "closer" comparison with macroscopic experiments, and the latter predicts the materials' microscopic properties \cite{Kondov2013}. Meanwhile, the two approaches have many interconnections, and their development and understanding often inspire each other.

In this work, we employ a symmetric blending strategy developed by P Seleson {\it et. al.} for the  nonlocal-to-local coupling \cite{seleson2013aa,Seleson2015a} and develop a new force-based atomistic-to-continuum model for a 1D atomistic chain.
We then study the stability property of the new coupling scheme in terms of the blending function and its blending size using similar mathematical tools from \cite{li2012}. We investigate the optimal number of atoms  within the blending region to ensure the positive-definiteness of the resulting force blending operator under the discrete $H^{1}$ semi-norm. The results admit a very narrow blending region to maintain the coercivity and efficiency when the number of atoms is large. In addition, the stability analysis developed in this work is crucial for the convergence for several popular iterative methods for solving large-force equilibrium systems.

We will arrange the paper as follows. In section 2, we introduce the force-based symmetric blending method for a 1D atomistic chain. We construct an atomistic, linearized force equation and a continuum, linearized force equation from the atomistic energy equation. The consistency between these methods is discussed in Proposition \ref{proposition:forceconsistency}. A blending function is then introduced to symmetrically combine these two force-based equations.

In section 3, we establish the optimal conditions on the size of the blending region for the blending force operator with respect to the $H^1$ stability. Theorem \ref{theorem:blendingregion} and Theorem \ref{corollary: blendsize} establish these conditions.

In section 4, a uniform stretch is applied to compute the critical strain errors for various types of 
blending functions with different blending sizes. It is found that the cubic blending function is optimal.

We find that a larger polynomial size on the blending region is suggested from the numerical trials when the number of atoms in the chain is just moderately large.

Also in section 4, we test a sine and Gaussian external force to the system to model the displacement with our force-based blending method. 
The displacements produced by the blending methods with sufficient blending size agree with those of fully atomistic models. In addition, we compare the impact of interaction range of the atomistic model, and observe that an interaction range potential greater than $2$ neighbors does not change the displacement significantly.

\section{Derivation of the Symmetric and Consistent Force-Based Scheme}

In this section, we will introduce notations, introduce the reference atomistic model, and then derive the continuum approximation.
After this, we introduce a blending equation to symmetrically combine these two models. Utilizing this blending equation, a coupling scheme for the blended atomistic and continuum forces is created.

\subsection{Notations}
We consider a 1D atomistic chain with finite interaction range up to the $N$-th nearest neighbor and a total number of $2M$ atoms within the domain $\Omega$. We denote the scaled reference lattice $x_{\ell}=a\ell$ for $\ell \in \Z$ with fixed reference  lattice spacing constant $a:=\frac{1}{M}$ such that \helen{we can select a reference domain which is fixed to be $\Omega =(-1, 1]$.} Throughout, \helen{the interaction range $N$ will be fixed.} 
The chain is deformed to a current configuration $y_{\ell}=x_{\ell}+u_{\ell}$.

The displacement field $u=(u_{\ell})_{\ell \in \Z}:\mathbb{Z}\rightarrow \mathbb{R}$ is assumed to be $2M$ periodic discrete function and  $\mathcal{U}$ denotes the space of all $2M$ periodic displacement functions
\begin{equation*}
    \mathcal{U}:=\{u:\,u_{\ell+2M}=u_{\ell}|\ell \in \Z \}.
\end{equation*}
Accordingly, we set the deformation space by
\begin{equation*}
    \mathcal{Y}:=\{y:\, y_{\ell}=x_{\ell}+u_{\ell} |u \in \mathcal{U}, \ell \in \Z \}.
\end{equation*}
We also define the discrete differentiation operator for simplicity, $u^{'}$, on periodic displacements by
$$u^{'}_{\ell}:=\frac{u_{\ell+1}-u_{\ell}}{a}.$$
Then we may define the higher-order discrete differentiation $u^{''}$, $u^{(3)}$, and $u^{(4)}$ for $\ell$ by
\begin{align}\label{def_FiniteDiff}
\begin{cases}
u^{''}_{\ell}:=\frac{u'_{\ell}-u'_{\ell-1}}{a},\\
u^{(3)}_{\ell}:=\frac{u''_{\ell+1}-u''_{\ell}}{a},\\
 u^{(4)}_{\ell}:=\frac{u^{(3)}_{\ell}-u^{(3)}_{\ell-1}}{a}.
\end{cases}
\end{align}
\noindent
For a displacement $u\in \mathcal{U}$ and its discrete derivatives, we employ the discrete $\ell^{2}$ and $\ell^{\infty}$ norms by
\begin{align}\label{def_norm}
\norm{u}_{\ell^{\elaine{2}}}^{2}:= \sum_{\ell=-M+1}^{M}\abs{u_\ell}^{2}a,\quad
\text{ and }\quad \norm{u}_{\ell^{\infty}}:=\max_{-M+1\le \ell \le M} \abs{u_{\ell}}.
\end{align}
In particular, the associated inner product for $\ell_{2}$ is $$\<u,w\>:= \sum_{\ell=-M+1}^{M}u_{\ell}w_{\ell}\,a.$$ We also employ the discrete $H^{1}$ semi norm, $\abs{u}_{H^{1}}^{2}=\norm{u^{'}}_{\ell_{2}}^{2}$ in the stability analysis.

\elaine{Meanwhile, we proceed with $\widetilde{u}:\mathbb{R}\rightarrow \mathbb{R}$ as a quintic spline interpolation of $u$ such that
\begin{equation}\label{quintic_spline}
\begin{split}
\widetilde{u}(a\ell)&=u_{\ell},\\
\widetilde{u}(-Ma)& =\widetilde{u}(Ma),\\
\lim_{t\rightarrow (a\ell)^{-}}\frac{d^{\omega}\widetilde{u}}{dx^{\omega}}(t)
&=\lim_{t\rightarrow (a\ell)^{+}}\frac{d^{\omega}\widetilde{u} }{d x^{\omega}}(t),\quad \omega = 1,\dots, 4
\end{split}
\end{equation}
As $\widetilde{u}$ is a continuous function, we can introduce notations for its derivatives, for instance, $\widetilde{u}_x$ as its first derivative at $(a\ell)$, and  $\widetilde{u}_{xx}$ as its second derivative at $(a\ell)$, etc.}

We can compare the derivatives of $\widetilde{u}(x)$ with the differencing of $u_\ell$. Clearly, we have
\begin{equation}\label{deriv_vs_diff}
\begin{split}
    u'_{\ell} =& \widetilde{u}_{x}(a\ell)+ \frac{a}{2} \widetilde{u}_{xx}(\xi)\\
    u^{''}_{\ell} =& \widetilde{u}_{xx}(a\ell)+ \frac{a^2}{12} \widetilde{u}_{xxxx}(\tilde{\xi})
\end{split}
\end{equation}
 \elaine{Note that throughout, subscript $\ell$ is used to denote when discrete differentiation is employed; whereas subscript $x$ is used to denote when considering the true derivatives.} 

As in \cite{li2012}, we will frequently use the following discrete summation by parts identity:
\begin{lemma}\label{lemma:summation}
\noindent Suppose $\{u_{\ell}\}^{b}_{\ell=a+1}$ and $\{v_{\ell}\}^{b}_{\ell=a+1}$ are two sequences, then we have
\[\sum_{\ell=a+1}^{b}u_{\ell}(v_{\ell}-v_{\ell-1})=[u_{b}v_{b}-u_{a}v_{a}]-\sum_{\ell=a+1}^{b}(u_{\ell}-u_{\ell-1})v_{\ell-1}.
\]
Furthermore, when both  $\{u_{\ell}\}^{b}_{\ell=a+1}$ and $\{v_{\ell}\}^{b}_{\ell=a+1}$ are periodic sequences with
$u(a)=u(b)$ and $v(a)=v(b)$, we have
\[
\sum_{\ell=a+1}^{b}u_{\ell}(v_{\ell}-v_{\ell-1})=-\sum_{\ell=a+1}^{b}(u_{\ell}-u_{\ell-1})v_{\ell-1}.
\]
\end{lemma}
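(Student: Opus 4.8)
The plan is to establish this identity via the discrete product rule together with a telescoping argument, which is the exact discrete counterpart of the Leibniz rule and integration by parts. First I would record the purely algebraic identity obtained by adding and subtracting the cross term $u_{\ell}v_{\ell-1}$:
\[
u_{\ell}v_{\ell}-u_{\ell-1}v_{\ell-1}=u_{\ell}(v_{\ell}-v_{\ell-1})+(u_{\ell}-u_{\ell-1})v_{\ell-1},
\]
which holds for every index $\ell$ and requires no hypotheses on the two sequences. This is the discrete Leibniz rule, and it is the only nontrivial manipulation in the whole proof.

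Next I would sum this identity over $\ell$ from $a+1$ to $b$. On the left-hand side the sum telescopes: consecutive terms $u_{\ell}v_{\ell}$ and $u_{\ell-1}v_{\ell-1}$ cancel in pairs, leaving only the boundary contribution $u_{b}v_{b}-u_{a}v_{a}$. On the right-hand side the sum splits by linearity into the two sums appearing in the statement, namely $\sum_{\ell=a+1}^{b}u_{\ell}(v_{\ell}-v_{\ell-1})$ and $\sum_{\ell=a+1}^{b}(u_{\ell}-u_{\ell-1})v_{\ell-1}$. Rearranging so as to isolate the first of these sums then yields
\[
\sum_{\ell=a+1}^{b}u_{\ell}(v_{\ell}-v_{\ell-1})=[u_{b}v_{b}-u_{a}v_{a}]-\sum_{\ell=a+1}^{b}(u_{\ell}-u_{\ell-1})v_{\ell-1},
\]
which is exactly the first claimed identity with $[u_{b}v_{b}-u_{a}v_{a}]$ as the boundary term. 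For the periodic case I would simply observe that the hypotheses $u_{a}=u_{b}$ and $v_{a}=v_{b}$ force $u_{b}v_{b}-u_{a}v_{a}=0$, so the boundary term drops and the first identity collapses to the second.

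I do not anticipate any substantial obstacle, since the result is elementary; the only point demanding genuine care is the bookkeeping at the endpoints. Specifically, I would double-check that the telescoped boundary term is precisely $u_{b}v_{b}-u_{a}v_{a}$ (and not shifted by one index), and that the residual sum correctly carries the lagged factor $v_{\ell-1}$ rather than $v_{\ell}$. These are exactly the conventions used in \cite{li2012}, so the identity will be applied later in the stability analysis with the boundary term vanishing automatically under the $2M$-periodicity of displacements in $\mathcal{U}$.
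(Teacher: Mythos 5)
Your proof is correct: the discrete Leibniz identity $u_{\ell}v_{\ell}-u_{\ell-1}v_{\ell-1}=u_{\ell}(v_{\ell}-v_{\ell-1})+(u_{\ell}-u_{\ell-1})v_{\ell-1}$ holds identically, the left side telescopes to $u_{b}v_{b}-u_{a}v_{a}$ upon summation, and the periodic hypotheses kill the boundary term. The paper itself states this lemma without proof (citing it as a standard summation-by-parts identity from the literature), and your telescoping argument is precisely the canonical proof it implicitly relies on, with the endpoint bookkeeping and the lagged factor $v_{\ell-1}$ handled correctly.
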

We use this lemma to find conditions on coupling to ensure the positive-definiteness of the bilinear form of the symmetric, blended force-based operator.

 \subsection{1D atomisitic and continuum models}
{\bf Energy formulations and consistency analysis.\\}
We now consider a one-dimensional  atomistic periodic chain deformed into configuration $y\in \mathcal{Y}$. Recall that the atomistic periodicity is fixed to $2M$, and the interaction range is fixed to $N$-th neighbors. The total atomistic energy  for this periodic chain is given by
\begin{equation}\label{def_AtomicE}
E^{a,tot}(y):=\sum_{\ell=-M+1 }^{M} \sum_{\substack{k=-N, \\ k\neq0}}^{N} \frac{a}{2}  \phi\big(\frac{y_{\ell+k}-y_{\ell}}{a}\big),
\end{equation}
where $\phi(\cdot):\mathbb{R}\rightarrow \mathbb{R}$ is a Lennard-Jones type potential. It can also be viewed as the energy density per unit volume for pairwise interactions. We assume that $\phi(\cdot)$ has the following properties:
\begin{itemize}
    \item $\phi(r)=\phi(\abs{r})$;
    \item $\phi$ is at least four times differentiable; and
    \item $\phi_{xx}(1)>0$ and $\phi_{xx}(k) \leq 0$ for $k \geq 2$.
\end{itemize}
In the numerical simulation, we employ the Morse potential, and a graphical illustration can be found in Figure~\ref{fig:1D_morsePot}.

\begin{figure}[htp!]
    \centering
    \includegraphics[width=0.5\textwidth, height=6.0 cm]{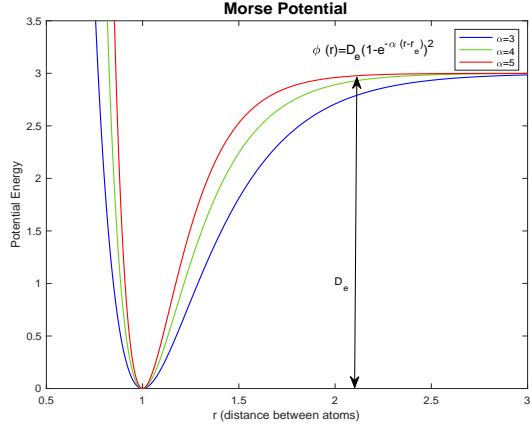}
    \caption{Graphical illustration of the Lennard-Jones type interaction potential (Morse potential) used in numerical experiments. Notice that the local minimum is achieved at the nearest neighbour distance $r=r_e$. 
    }
    \label{fig:1D_morsePot}
\end{figure}

Next, we derive the continuum model by only using the first neighbor distance, that is we \elaine{only use the differencing $u_{\ell}^{'}$ as we approximate the argument for $\phi$.}
\elaine{For $k=2,\dots, N$, we have
\begin{equation*}
    \begin{split}
        \frac{y_{\ell+k}-y_{\ell}}{a}  &=  \frac{x_{\ell+k}-x_{\ell}+u_{\ell+k}-u_{\ell}}{a}\\
        &\elaine{=} k+u'_{\ell}+
        \sum_{j=1}^{k-1}u'_{\ell+j},
    \end{split}
\end{equation*}
Using the error estimates listed in \eqref{deriv_vs_diff}, we can replace $u'_{\ell+j}$ by $u'_{\ell}$ and estimate the discrepancy
\begin{equation}\label{def_YApprox}
    \begin{split}
        \frac{y_{\ell+k}-y_{\ell}}{a}  &=k+u'_{\ell}+\sum_{j=1}^{k-1}\widetilde{u}_{x}\left(a(\ell+j)\right)+c_1 a\\
        &=k+u'_{\ell}+\sum_{j=1}^{k-1}\widetilde{u}_{x}\left((a\ell)\right)+c_2a\\
        &=k+ku'_{\ell}+c_3a,
    \end{split}
\end{equation}
where $c_1$, $c_2$ and $c_3$ are constants depending on $k$ and regularity of $\widetilde{u}$.
For $k=-N,\dots, -2$, we can obtain similar consistency estimates. }

The atomistic energy equation is rooted in discrete, nonlocal energy descriptions. 
 Assuming the finest mesh with each atom regarded as a node and substituting the previous approximation into the atomistic energy \eqref{def_AtomicE}, we thus defined the continuum energy as
\begin{equation}\label{def_ContinuumE}
 E^{c}(u):=\sum_{\ell=-M+1 }^{M} \sum_{\substack{k=-N, \\ k\neq0}}^{N} \frac{a}{2}  \phi(k+ku_{\ell}^{'}) .
 \end{equation}
 So far, both the atomistic energy and the continuum energy are non-linearly dependent on the displacement field $\{u_\ell\}_{\ell=-M+1}^{M}$, and we would like to apply further simplifications to obtain linear models.
 \elaine{
 To linearize the total atomistic energy, we follow a similar argument as deriving the continuum energy,
 \begin{equation*}
 \begin{split}
     \frac{y_{\ell+k}-y_{\ell}}{a}&=\frac{x_{\ell+k}-x_{\ell}+u_{\ell+k}-u_{\ell}}{a} \\
     &=k+\frac{u_{\ell+k}-u_{\ell}}{a}.
 \end{split}
 \end{equation*}
 Therefore, the total atomistic energy can be written as
 \begin{equation}\label{def_AtomicE2}
     \begin{split}
         E^{a,tot}(y)&:=\sum_{\ell=-M+1 }^{M} \sum_{\substack{k=-N, \\ k\neq0}}^{N} \frac{a}{2}  \phi\big(\frac{y_{\ell+k}-y_{\ell}}{a}\big) \\
         &=\sum_{\ell=-M+1 }^{M} \sum_{\substack{k=-N, \\ k\neq0}}^{N} \frac{a}{2} \phi\big(k+\frac{u_{\ell+k}-u_{\ell}}{a}\big)
     \end{split}
\end{equation}

Next, we utilize Taylor expansion to $\phi\big(k+\frac{u_{\ell+k}-u_{\ell}}{a}\big)$ at the reference configuration to linearize the expression.
\[ \phi(k+\frac{u_{\ell+k}-u_{\ell}}{a})= \phi(k)+\frac{u_{\ell+k}-u_{\ell}}{a}\phi_{x}(k)+\frac{1}{2}(\frac{u_{\ell+k}-u_{\ell}}{a})^{2}\phi_{xx}(k)+O\left((\frac{u_{\ell+k}-u_{\ell}}{a})^{3}\right).  \]

\noindent Inserting the Taylor approximation into the atomistic energy equation \eqref{def_AtomicE2}, we obtain
\begin{equation}\label{def_linearatomE}
    \sum_{\ell=-M+1 }^{M} \sum_{\substack{k=-N, \\ k\neq0}}^{N} \frac{a}{2}\left[\phi(k)+\frac{u_{\ell+k}-u_{\ell}}{a}\phi_{x}(k)+\frac{1}{2}(\frac{u_{\ell+k}-u_{\ell}}{a})^{2}\phi_{xx}(k)+O\left((\frac{u_{\ell+k}-u_{\ell}}{a})^{3}\right)\right].
\end{equation}
Then, without loss of generality, we assume \elaine{$\sum\limits_{\substack{k=-N, \\ k\neq0}}^{N}\phi(k)=0$ }since this term will not contribute to force. Also, since the reference configuration is a local minimizer and the potential is symmetric, we have
\[\sum_{\substack{k=-N, \\ k\neq0}}^{N} \frac{u_{\ell+k}-u_{\ell}}{a}\phi_{x}(k)=0.\]

Thus, the linearized atomistic energy is
\begin{equation}\label{LinearAtomE}
    \begin{split}
        E^{a,lin}(u)&:= \sum_{\ell=-M+1 }^{M} \sum_{\substack{k=-N, \\ k\neq0}}^{N}\frac{a}{2}\left(\frac{1}{2}(\frac{u_{\ell+k}-u_{\ell}}{a})^{2}\phi_{xx}(k)\right).
    \end{split}
\end{equation}

Following the linearization of the atomistic energy, for the continuum energy, we utilize Taylor expansion to $\phi(k+ku_{\ell}^{'})$ at the reference configuration to linearize the expression
\[ \phi(k+ku_{\ell}^{'})\elaine{=} \phi(k)+ku_{\ell}^{'}\phi_{x}(k)+\frac{1}{2}(ku_{\ell}^{'})^{2}\phi_{xx}(k)\elaine{+O\left((ku'_{\ell})^{3}\right)}.  \]

\noindent Inserting the Taylor approximation into the continuum energy equation \eqref{def_ContinuumE}, we obtain
\begin{equation}\label{def_ContinuumE2}
\begin{split}
&\sum_{\ell=-M+1}^{M} \sum_{\substack{k=-N, \\ k\neq0}}^{N} \frac{a}{2}\left(\phi(k)+ku_{\ell}^{'}\phi_{x}(k)+\frac{1}{2}\left(ku_{\ell}^{'}\right)^{2}\phi_{xx}(k)\elaine{+O\left((ku'_{\ell})^{3}\right)}\right).
\end{split}
\end{equation}
\noindent Following the same justification as above regarding the terms of the Taylor expansion, the linearized continuum energy associated with the finest mesh is defined as
\begin{equation}
\label{def_ContLinE}
\begin{split}
E^{c,lin}(u) & := \sum_{\ell=-M+1}^{M} \sum_{\substack{k=-N ,\\k\neq0}}^{N}\frac{a}{2}\left(\frac{k^{2}}{2}(u_{\ell}^{'})^{2}\phi_{xx}(k)\right) \\
& =\sum_{\ell=-M+1}^{M} \left(\sum_{k=1}^{N} \frac{k^{2}}{2}\phi_{xx}(k)\right)(u_{\ell}^{'})^{2}a.
\end{split}
\end{equation}}
\elaine{
\begin{remark}
Notice that the classical continuum mechanics for interaction range up to the $N$-th nearest neighbour has the following form:
\begin{equation} \label{def_LinearizedCE}
\widetilde{E}^{c,lin}(\widetilde{u})=\int W\left(\frac{d\widetilde{u}}{dx}\right)^{2}dx
\end{equation}
\noindent where $\frac{d\widetilde{u}}{dx}$ is the deformation field and $W$ represents the strain energy density
\begin{equation} \label{def_ContinuumEDensity}
W:= \sum_{k=1}^{N} \frac{k^{2}}{2}\phi_{xx}(k) .
\end{equation}
Notice that if we use a fine mesh and applying a Riemann sum to approximate the integral of \eqref{def_LinearizedCE}, we can convert \eqref{def_LinearizedCE} into \eqref{def_ContLinE}.
Therefore, the linearized continuum energy for a discrete lattice system is consistent with the theory of classical continuum mechanics.
\helen{For the consistency between the classical continuum mechanics and the linearized continuum energy, we refer to Section \ref{appendix} as it follows closely with the consistency for the linearized continuum energy equation and the atomistic energy description.
}
\end{remark}}

Since we obtain the atomistic and continuum energies, in the next proposition, we will summarize the truncation errors.
\begin{proposition}[Consistency Analysis of Linearized Energy Formulations]
Given a fully refined continuum mesh on the 1D atomistic chain, we derive the linearized continuum energy equation \eqref{def_ContLinE},
\elaine{\begin{equation*}
E^{c,lin}(u):  =\sum_{\ell=-M+1}^{M} \sum_{\substack{k=-N ,\\k\neq0}}^{N}\frac{a}{2}\left(\frac{k^{2}}{2}(u_{\ell}^{'})^{2}\phi_{xx}(k)\right).
\end{equation*}
from the atomistic energy description, \eqref{def_AtomicE},
\begin{equation*}
    E^{a,lin}(u):=\sum_{\ell=-M+1 }^{M} \sum_{\substack{k=-N, \\ k\neq0}}^{N}\frac{a}{2}\left(\frac{1}{2}(\frac{u_{\ell+k}-u_{\ell}}{a})^{2}\phi_{xx}(k)\right).
\end{equation*}
with the deformed configuration and the displacement field linked by $y_{\ell}=x_{\ell}+u_\ell$.
Then, the consistency between the linearized continuum energy equation and the atomistic energy equation is $O(a^2)$.}
\end{proposition}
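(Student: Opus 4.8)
The plan is to bound the difference $E^{a,lin}(u)-E^{c,lin}(u)$ directly, and the first step is to reduce both double sums over $k$ to $k=1,\dots,N$. Since $\phi(r)=\phi(\abs{r})$ forces $\phi_{xx}(k)=\phi_{xx}(-k)$, and since $\sum_{\ell}\big(\frac{u_{\ell+k}-u_{\ell}}{a}\big)^2$ is invariant under $k\mapsto -k$ by periodicity (shift $\ell\mapsto\ell+k$), each energy collapses to a single sum $\sum_{\ell}\sum_{k=1}^{N}\frac{a}{2}\phi_{xx}(k)(\cdots)$. Next I would write the telescoping identity $\frac{u_{\ell+k}-u_{\ell}}{a}=\sum_{j=0}^{k-1}u'_{\ell+j}$, so that the atomistic summand differs from the continuum summand $k^{2}(u'_{\ell})^{2}$ only through replacing the spread-out slopes $u'_{\ell+j}$ by the single slope $u'_{\ell}$.

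Then I would expand the square $\big(\sum_{j=0}^{k-1}u'_{\ell+j}\big)^2=\sum_{i,j}u'_{\ell+i}u'_{\ell+j}$ and sum over the periodic index $\ell$. Writing $v_{\ell}:=u'_{\ell}$, the inner sum $\sum_{\ell}v_{\ell+i}v_{\ell+j}$ depends only on $m=j-i$ by periodicity, and the number of pairs with a given $m$ is $k-\abs{m}$. Combined with $\sum_{\ell}v_{\ell}v_{\ell+m}=\sum_{\ell}v_{\ell}^{2}-\tfrac12\sum_{\ell}(v_{\ell+m}-v_{\ell})^{2}$, this yields the exact identity
\[
\sum_{\ell}\Big(\tfrac{u_{\ell+k}-u_{\ell}}{a}\Big)^{2}-k^{2}\sum_{\ell}(u'_{\ell})^{2}
=-\sum_{m=1}^{k-1}(k-m)\sum_{\ell}\big(u'_{\ell+m}-u'_{\ell}\big)^{2},
\]
where the decisive cancellation is that the coefficient of $\sum_{\ell}(u'_{\ell})^{2}$ vanishes because $\sum_{m=1}^{k-1}(k-m)=\tfrac{k(k-1)}{2}$. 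This is the heart of the argument: naively the two energies differ at $O(a)$, but that $O(a)$ contribution cancels identically, leaving only genuine second-difference terms.

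The final step is to estimate what remains. From \eqref{def_FiniteDiff} we have $u'_{\ell+m}-u'_{\ell}=a\sum_{i=1}^{m}u''_{\ell+i}$, so each such difference is $O(a)$ with constant controlled by the regularity of $\widetilde{u}$; hence $\norm{u'_{\cdot+m}-u'_{\cdot}}_{\ell^{2}}^{2}=\sum_{\ell}(u'_{\ell+m}-u'_{\ell})^{2}a=O(a^{2})$, giving $\sum_{\ell}(u'_{\ell+m}-u'_{\ell})^{2}=O(a)$. Multiplying by the outer weight $\frac{a}{2}\phi_{xx}(k)$ and summing over the finitely many $k\le N$ and $m\le k-1$ then delivers $E^{a,lin}(u)-E^{c,lin}(u)=O(a^{2})$.

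The main obstacle is precisely the cancellation exploited in the second step. The most naive comparison—replacing $\frac{u_{\ell+k}-u_{\ell}}{a}$ by $ku'_{\ell}$ term-by-term via \eqref{deriv_vs_diff} and bounding the resulting difference of squares—only yields $O(a)$, because the sum over $\ell$ carries $2M=2/a$ terms. Recovering the sharp $O(a^{2})$ requires using the telescoping/periodic structure above; equivalently, one can carry the Taylor expansion in \eqref{deriv_vs_diff} one order further and observe that the leading cross term reduces to $\sum_{\ell}u'_{\ell}u''_{\ell}$, which telescopes to $O(1)$ rather than $O(1/a)$ by the periodic summation-by-parts identity in Lemma \ref{lemma:summation}. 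The combinatorial identity is the cleanest way to expose this cancellation exactly.
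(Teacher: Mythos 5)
Your proof is correct, but it takes a genuinely different route from the paper's own argument. The paper proves this proposition by substituting the spline-based Taylor expansion $u_{\ell+k}-u_{\ell}=ka\,\widetilde{u}_{x}+O(a^{2})$ (from \eqref{quintic_spline}) into both double sums and cancelling the leading terms pointwise in $\ell$, then concluding $O(a^{2})$. You instead fold both energies onto $k=1,\dots,N$ using evenness of $\phi_{xx}$ and periodicity, expand the telescoping square exactly, and reduce everything to the identity
\[
\sum_{\ell}\Big(\tfrac{u_{\ell+k}-u_{\ell}}{a}\Big)^{2}-k^{2}\sum_{\ell}(u'_{\ell})^{2}
=-\sum_{m=1}^{k-1}(k-m)\sum_{\ell}\big(u'_{\ell+m}-u'_{\ell}\big)^{2},
\]
which checks out (the pair count $k-\abs{m}$ and the sum $\sum_{m=1}^{k-1}(k-m)=\tfrac{k(k-1)}{2}$ are exactly what make the coefficient of $\sum_{\ell}(u'_{\ell})^{2}$ vanish), followed by the elementary bound $\sum_{\ell}(u'_{\ell+m}-u'_{\ell})^{2}=O(a)$. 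The comparison favors your version on rigor: as you point out, a termwise expansion leaves per-site errors whose sum over the $2M=2/a$ sites is a priori only $O(a)$, and the paper's final step --- where the $O(a^{2})$ is written outside the sums over $\ell$ --- silently skips this accounting; the cancellation that rescues the rate (the $O(a)$ cross terms proportional to $\sum_{\ell}\widetilde{u}_{x}\widetilde{u}_{xx}$ are themselves small only because of periodicity and discrete summation by parts, Lemma \ref{lemma:summation}) is precisely what your exact identity makes explicit. What the paper's route buys is brevity and uniformity with the proof of Proposition \ref{proposition:forceconsistency}, which uses the same spline expansion; what yours buys is an honest, fully justified $O(a^{2})$ bound and, as a byproduct, a sign-definite remainder: since $\phi_{xx}(k)\le 0$ for $k\ge 2$, your identity shows $E^{a,lin}(u)\ge E^{c,lin}(u)$ exactly, with equality up to the weighted squared second differences of $u$.
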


\begin{proof}
\elaine{Comparing $E^{c,lin}$ and $E^{a,lin}$, we have
\begin{equation*}
    E^{c,lin}-E^{a,lin}=\sum_{\ell=-M+1}^{M} \sum_{\substack{k=-N ,\\k\neq0}}^{N}\frac{a}{2}\left(\frac{k^{2}}{2}(u_{\ell}^{'})^{2}\phi_{xx}(k)\right)-\sum_{\ell=-M+1 }^{M} \sum_{\substack{k=-N, \\ k\neq0}}^{N}\frac{a}{2}\left(\frac{1}{2}(\frac{u_{\ell+k}-u_{\ell}}{a})^{2}\phi_{xx}(k)\right).
\end{equation*}

For any $k=2,...,N$, we compare $u_{\ell+k}$ around $\ell$. Recall the quintic spline interpolation $\widetilde{u}$ defined in \eqref{quintic_spline}, we have
\begin{equation*}
    u_{\ell+k}=u_{\ell}+ka \widetilde{u}_{x}+O(a^2).
\end{equation*}

Thus, for all $k$,
\begin{equation*}
    u_{\ell+k}-u_{\ell}=ka \widetilde{u}_{x}+O(a^2).
\end{equation*}

Then, the consistency analysis of energies yields:
\begin{equation*}
    \begin{split}
        E^{c,lin}&-E^{a,lin}\\
        &=\sum_{\ell=-M+1}^{M} \sum_{\substack{k=-N ,\\k\neq0}}^{N}\frac{a}{2}\left(\frac{k^{2}}{2}(u_{\ell}^{'})^{2}\phi_{xx}(k)\right)-\sum_{\ell=-M+1 }^{M} \sum_{\substack{k=-N, \\ k\neq0}}^{N}\frac{a}{2}\left(\frac{1}{2}(\frac{u_{\ell+k}-u_{\ell}}{a})^{2}\phi_{xx}(k)\right) \\
        &=\sum_{\ell=-M+1}^{M} \sum_{\substack{k=-N ,\\k\neq0}}^{N}\frac{a}{2}\left(\frac{k^{2}}{2}(\frac{u_{\ell+1}-u_{\ell}}{a})^{2}\phi_{xx}(k)\right)-\sum_{\ell=-M+1 }^{M} \sum_{\substack{k=-N, \\ k\neq0}}^{N}\frac{a}{2}\left(\frac{1}{2}(\frac{u_{\ell+k}-u_{\ell}}{a})^{2}\phi_{xx}(k)\right) \\
        &=\sum_{\ell=-M+1}^{M} \sum_{\substack{k=-N ,\\k\neq0}}^{N}\frac{a}{2}\left(\frac{k^{2}}{2}(\frac{a\widetilde{u}_{x}+O(a^{2})}{a})^{2}\phi_{xx}(k) \right)-\sum_{\ell=-M+1}^{M} \sum_{\substack{k=-N ,\\k\neq0}}^{N}\frac{a}{2}\left(\frac{1}{2}(\frac{ka\widetilde{u}_{x}+O(a^{2})}{a})^{2}\phi_{xx}(k) \right) \\
        &=\sum_{\ell=-M+1}^{M} \sum_{\substack{k=-N ,\\k\neq0}}^{N} \frac{a}{2}\frac{k^{2}}{2}\phi_{xx}(k)+O(a^{2})-\sum_{\ell=-M+1}^{M} \sum_{\substack{k=-N ,\\k\neq0}}^{N} \frac{a}{2}\frac{k^{2}}{2}\phi_{xx}(k)+O(a^{2}) \\
        &=O(a^{2}).
    \end{split}
\end{equation*}
}
\end{proof}
Next, we derive the formulae of forces for both \elaine{linear} atomistic and continuum models. Because the mesh is fully refined, the linearized continuum force of atom $\ell$ can be obtained from taking the first order variation of the linearized continuum energy \eqref{def_ContLinE} with respect to $u_{\ell}$, we thus get:
\begin{equation}\label{def_ContinuumF}
\begin{split}
    {F}_{\ell\elaine{,}}^{c,lin}(u)&:= \frac{1}{a}\frac{\delta E^{c,lin}(u)}{\delta u_{\ell}}= \frac{\delta \left[\sum_{j=-M+1}^{M}\left(\sum_{k=1}^{N}\frac{k^2}{2} \phi_{xx}(k)\right)\left(u_{j}^{'}\right)^{2}\right]}{\delta u_{\ell}} \\
    &=- \left(\left(\sum_{k=1}^{N} \frac{k^{2}}{2} \phi_{xx}(k)\right)\frac{2(u_{\ell+1}-u_{\ell})}{a^{2}}-\left(\sum_{k=1}^{N} \frac{k^{2}}{2} \phi_{xx}(k)\right)\frac{2(u_{\ell}-u_{\ell-1})}{a^{2}}\right) \\
    &=- \left(\sum_{k=1}^{N}k^{2}\phi_{xx}(k)\right)u_{\ell}^{''} ,
\end{split}
\end{equation}
where we recall the shorthand notation $u_{\ell}^{''}$ as
\begin{equation*}
u_{\ell}^{''}:=\frac{u_{\ell+1}-2u_{\ell}+u_{\ell-1}}{a^{2}}.
\end{equation*}
For the atomisitic forces, we recall $y_\ell=x_\ell+u_\ell$,
take the first order variation of the total atomistic energy \eqref{def_AtomicE} at atom $\ell$ and notice we employ the forward finite-differencing, hence, we obtain
\begin{equation}\label{def_AtomisticF}
    \begin{split}
        F_{\ell\elaine{,}}^{a}(u)&:= \frac{\delta E^{a,tot}}{\delta u_{\ell}}= \frac{\delta}{\delta u_{\ell}}\sum_{j =-M+1 }^{M}\sum_{\substack{k=-N,\\ k \neq{0}}}^{N}\frac{1}{2}\phi (\frac{y_{j+k}-y_{j}}{a})\\
        &=- \sum_{\substack{k=-N, \\k \neq{0}}}^{N}\frac{1}{2a}\left(\phi_{x}(k+\frac{u_{\ell+k}-u_{\ell}}{a})-\phi_{x}(k+\frac{u_{\ell}-u_{\ell-k}}{a})\right).
    \end{split}
\end{equation}
Linearizing the forces around the reference configuration by applying Taylor expansion to $\phi_{x}(\cdot)$, we obtain the linearized atomistic forces
\begin{equation}\label{def_AtomisticLinF}
    F_{\ell\elaine{,}}^{a,lin}(u):=-\sum_{\substack{k=-N,\\k \neq{0}}}^{N} \frac{1}{2}   \phi_{xx}(k)\left(\frac{u_{\ell+k}-2u_{\ell}+u_{\ell-k}}{a^{2}}\right)=-\sum_{k=1}^{N}  \phi_{xx}(k)\left(\frac{u_{\ell+k}-2u_{\ell}+u_{\ell-k}}{a^{2}}\right).
\end{equation}
In the next proposition, we summarize the consistency errors between the linearized atomisitic and linearized continuum forces.
\begin{proposition} [Consistency analysis of force]\label{proposition:forceconsistency}
Given a fully refined continuum mesh on the 1D atomistic chain, the linearized atomistic force equation \eqref{def_AtomisticLinF} for atom $\ell$ is
\begin{equation*}
    F_{\ell\elaine{,}}^{a,lin}(u):=-\sum_{k=1}^{N}  \phi_{xx}(k)\left(\frac{u_{\ell+k}-2u_{\ell}+u_{\ell-k}}{a^{2}} \right)
\end{equation*}
and the linearized continuum force equation \eqref{def_ContinuumF} for node $\ell$ is
\begin{equation*}
    F_{\ell\elaine{,}}^{c,lin}(u)=-\left(\sum_{k=1}^{N}k^{2}\phi_{xx}(k)\right)u_{\ell}^{''}.
\end{equation*}
Thus, the consistency error between \eqref{def_AtomisticLinF} and \eqref{def_ContinuumF} is $O(a^2)$.
\end{proposition}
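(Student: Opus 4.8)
The plan is to reduce the difference of the two forces to a single sum over the interaction range and then Taylor-expand each central-difference stencil against the quintic spline interpolant $\widetilde{u}$. First I would write both forces with a common second-difference operator $D_k u_\ell := \frac{u_{\ell+k}-2u_\ell+u_{\ell-k}}{a^2}$. With this notation \eqref{def_AtomisticLinF} reads $F_\ell^{a,lin}(u) = -\sum_{k=1}^N \phi_{xx}(k)\,D_k u_\ell$, while \eqref{def_ContinuumF}, together with the identification $u_\ell'' = D_1 u_\ell$, reads $F_\ell^{c,lin}(u) = -\big(\sum_{k=1}^N k^2\phi_{xx}(k)\big)\,D_1 u_\ell$. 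Subtracting term by term gives
\begin{equation*}
F_\ell^{a,lin}(u) - F_\ell^{c,lin}(u) = -\sum_{k=1}^N \phi_{xx}(k)\Big(D_k u_\ell - k^2\,D_1 u_\ell\Big),
\end{equation*}
so it suffices to show that each bracketed term is $O(a^2)$ uniformly in $\ell$.

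Second, I would expand $D_k u_\ell$ using the interpolation identities \eqref{quintic_spline}--\eqref{deriv_vs_diff}. Since $\widetilde{u}$ interpolates $u$ at the nodes and is at least four times continuously differentiable by the spline matching conditions, the standard second central-difference estimate for a $C^4$ function yields, for each $k$,
\begin{equation*}
D_k u_\ell = k^2\,\widetilde{u}_{xx}(a\ell) + \frac{k^4 a^2}{12}\,\widetilde{u}_{xxxx}(\xi_k),
\end{equation*}
which generalizes the $k=1$ estimate $u_\ell'' = \widetilde{u}_{xx}(a\ell) + \frac{a^2}{12}\widetilde{u}_{xxxx}(\tilde\xi)$ already recorded in \eqref{deriv_vs_diff}. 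Applying this with $k$ and with $k=1$, the leading $k^2\widetilde{u}_{xx}(a\ell)$ terms cancel, leaving
\begin{equation*}
D_k u_\ell - k^2\,D_1 u_\ell = \frac{a^2}{12}\Big(k^4\,\widetilde{u}_{xxxx}(\xi_k) - k^2\,\widetilde{u}_{xxxx}(\xi_1)\Big).
\end{equation*}
Substituting back and using that the interaction range $N$ is fixed, the outer coefficients $\sum_{k=1}^N |\phi_{xx}(k)|\,(k^4+k^2)/12$ form a finite constant independent of $a$, so that $F_\ell^{a,lin}(u) - F_\ell^{c,lin}(u) = O(a^2)$, which is the claim.

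The main obstacle I anticipate is not the algebra but making the remainder bound genuinely uniform. The intermediate points $\xi_k$ and $\xi_1$ produced by the Lagrange form of Taylor's theorem differ for each $k$ and each node $\ell$, so the bracket above does not collapse to a clean factor $(k^4-k^2)\widetilde{u}_{xxxx}(a\ell)$; to conclude a uniform $O(a^2)$ I would instead invoke that $\widetilde{u}$ is a $C^4$ periodic spline on the compact reference domain $\Omega=(-1,1]$, whence $\|\widetilde{u}_{xxxx}\|_{\ell^\infty}$ is finite and simultaneously controls every remainder. A secondary point to verify is that the stencil $[a\ell-ka,\,a\ell+ka]$ spans several spline knots, so the central-difference estimate must be justified from the global $C^4$ regularity of $\widetilde{u}$ rather than from smoothness on a single subinterval; since the matching conditions in \eqref{quintic_spline} guarantee exactly this global regularity, the argument goes through and the stated consistency order $O(a^2)$ follows.
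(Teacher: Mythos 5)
Your proposal is correct and follows essentially the same route as the paper's proof: both reduce the difference of forces to a sum of terms $D_k u_\ell - k^2 D_1 u_\ell$ and Taylor-expand the central differences via the quintic spline interpolant $\widetilde{u}$, cancelling the leading $k^2\widetilde{u}_{xx}$ terms to leave an $O(a^2)$ remainder. Your treatment is in fact slightly more careful than the paper's on the points you flag (the Lagrange-remainder intermediate points and the uniform bound via $\|\widetilde{u}_{xxxx}\|_{\ell^\infty}$ on the compact periodic domain), but the underlying argument is the same.
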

\begin{proof}
Comparing $F^{c,lin}$ and $F^{a, lin}$, we have
\begin{equation*}
        F_{\elaine{\ell,}}^{c,lin}(u)-_{\elaine{\ell,}}F^{a,lin}(u)=-\sum_{k=1}^{N}\phi_{xx}(k)\left(k^{2}u_{\ell}^{''}\right)+\sum_{k=1}^{N} \phi_{xx}(k)\left(\frac{u_{\ell+k}-2u_{\ell}+u_{\ell-k}}{a^2}\right).
\end{equation*}
For any $k=2,\dots,N$, we compare $u_{\ell+k}$ and $u_{\ell-k}$ around $\ell$. \elaine{
Recall the quintic spline interpolation $\widetilde{u}$ defined in \eqref{quintic_spline}, we have
\begin{equation*}
    \begin{split}
        u_{\ell+k}&= \widetilde{u}\left(a(\ell+k)\right)-\widetilde{u}\left(a\ell\right)+\widetilde{u}\left(a\ell\right)=
        u_{\ell}+ka \widetilde{u}_{x}+\frac{1}{2}(ka)^2\widetilde{u}_{xx}+\frac{1}{6}(ka)^3\elaine{\widetilde{u}_{xxx}}+O(a^4), \\
        u_{\ell-k}&= \widetilde{u}\left(a(\ell-k)\right)-\widetilde{u}\left(a\ell\right)+\widetilde{u}\left(a\ell\right)= u_{\ell}-ka \widetilde{u}_{x}+\frac{1}{2}(ka)^2\widetilde{u}_{xx}-\frac{1}{6}(ka)^3\widetilde{u}_{xxx}+O(a^4).
    \end{split}
\end{equation*}
}

\noindent \elaine{Utilizing} this Taylor expansion \elaine{for} the atomistic \elaine{and continuous} linear force equation\elaine{s}, the consistency analysis yields
\begin{equation*}
    \begin{split}
        F_{\elaine{\ell,}}^{c,lin}&(u)-F_{\elaine{\ell,}}^{a,lin}(u)\\&= -\sum_{k=1}^{N}\phi_{xx}(k)\left(k^{2}u_{\ell}^{''}\right)
        +\sum_{k=1}^{N}\phi_{xx}(k)(\frac{u_{\ell+k}-2u_{\ell}+u_{\ell-k}}{a^2}) \\
        &\elaine{=} -\sum_{k=1}^{N}\phi_{xx}(k)\big(k^2\frac{u_{\ell+1}-2u_{\ell}+u_{\ell-1}}{a^{2}} \big)+\sum_{k=1}^{N} \phi_{xx}(k)\big(k^2\elaine{\widetilde{u}_{xx}}+O(a^2)\big) \\
        &=\elaine{ -\sum_{k=1}^{N}\phi_{xx}(k)\big(k^{2}\widetilde{u}_{xx}+O(a^2)\big)+\sum_{k=1}^{N}\phi_{xx}(k)\big(k^2 \widetilde{u}_{xx}+O(a^2)\big)}\\
    &= O(a^2).
    \end{split}
\end{equation*}
\elaine{A more thorough proof can be seen in Subsection \ref{RigorProof}.}
\end{proof}
\begin{remark}
Notice that $a$ is chosen to be $\frac{1}{M}$ with $M$ being large, hence, the consistency error becomes small when the number of atoms within $\Omega$ is sufficiently large.
\end{remark}

\subsection{Derivation of a Symmetric Blending Model for the AtC Coupling}
In this section we will derive a symmetric and consistent force-based atomistic-to-continuum scheme for the 1D atomistic chain.

We first divide the domain of interest into three distinct sub-domains: $\Omega^{a}$: the domain described by the atomistic force; $\Omega^{c}$: the domain described by the continuum force; and $\Omega^{b}$: the blending region where the atomistic and continuum force models are both used.

We  now introduce a smooth blending function $\beta$ that can be defined as such:
\begin{definition}[Definition of blending function]\label{def_blendfunc}
We may define a smooth blending function $\beta_{\ell}$ such that:
\begin{equation}\label{blendingfunction}
    \beta_{\ell}=\begin{cases}
    1, \quad &\ell \in \Omega^{a} \\
    0, \quad &\ell \in \Omega^{c} \\
    \in(0,1), \quad &\ell \in \Omega^{b}.
    \end{cases}
\end{equation}
\end{definition}
This blending function can take many forms and we will employ linear spline, cubic spline, and quintic spline blending functions in the numerical experiments in Section 4.

Notice that creating a  linearized force equation will give way to easier analysis in studying the stability of the scheme and providing insights on the coupling conditions of more general cases, so we focus on blending the linearized atomistic and continuum forces. \helen{In order for consistent symmetry, we start from $k=-N,\dots, N,$ and $k\neq 0$.} Consequently, we start from the linearized, atomistic force equation in \eqref{def_AtomisticLinF} and incorporate the blending function $\beta_{\ell}$ as follows:
\begin{align*}
     &  F_{\ell,}^{a,lin}:=-\sum_{k=1}^{N}  \phi_{xx}(k)\frac{u_{\ell+k}-2u_{\ell}+u_{\ell-k}}{a^{2}}=-\sum_{\substack{k=-N,\\k \neq{0}}}^{N} \frac{1}{2} \phi_{xx}(k)\frac{u_{\ell+k}-2u_{\ell}+u_{\ell-k}}{a^{2}} \nonumber \\
    &\,= -\sum_{\substack{k=-N, \\k \neq{0}}}^{N}\left(\frac{\beta_{\ell}+\beta_{\ell+k}}{2}\right)\frac{1}{2}\phi_{xx}(k)\frac{u_{\ell+k}-2u_{\ell}+u_{\ell-k}}{a^{2}}-\sum_{\substack{k=-N, \\k \neq{0}}}^{N}\left(1-\frac{\beta_{\ell}+\beta_{\ell+k}}{2}\right)\frac{1}{2}\phi_{xx}(k)\frac{u_{\ell+k}-2u_{\ell}+u_{\ell-k}}{a^{2}},
    \end{align*}
   \helen{such that the term $\frac{u_{\ell+|k|}-2u_{\ell}+u_{\ell-|k|}}{a^2}$ is multiplied by the pair $\left(\frac{\beta_{\ell}+\beta_{\ell+|k|}}{2}\right)$ and $\left(\frac{\beta_{\ell}+\beta_{\ell-|k|}}{2}\right)$, respectively.}
Next, we further simplify and get
    \begin{align}
    \label{def_AtomisticFLinBlend}
     F_{\ell,}^{a,lin}
    =&-\sum_{k=1}^{N}\left(\frac{\beta_{\ell-k}+2\beta_{\ell}+\beta_{\ell+k}}{4}\right)\phi_{xx}(k)\frac{u_{\ell+k}-2u_{\ell}+u_{\ell-k}}{a^{2}}\nonumber\\
    &\qquad -\sum_{k=1}^{N}\left(1-\frac{\beta_{\ell-k}+2\beta_{\ell}+\beta_{\ell+k}}{4}\right)\phi_{xx}(k)\frac{u_{\ell+k}-2u_{\ell}+u_{\ell-k}}{a^{2}}.
    \end{align}
Then,  we approximate the second term of the equation by using the linearized continuum portion. Therefore, we get the blended force function which is defined as:
\begin{align}
\label{def_linear_bqcf}
   & F_{\ell,}^{bqcf,lin}(u)
    :=-\sum_{k=1}^{N}\left(\frac{\beta_{\ell-k}+2\beta_{\ell}+\beta_{\ell+k}}{4}\right)\phi_{xx}(k)\frac{u_{\ell+k}-2u_{\ell}+u_{\ell-k}}{a^{2}}\nonumber\\
    &\qquad\qquad\qquad\quad -\sum_{k=1}^{N}\left(1-\frac{\beta_{\ell-k}+2\beta_{\ell}+\beta_{\ell+k}}{4}\right)\phi_{xx}(k)k^{2}\frac{u_{\ell+1}-2u_{\ell}+u_{\ell-1}}{a^{2}} \\
    &\;=-\sum_{k=1}^{N}\left(\frac{\beta_{\ell-k}+2\beta_{\ell}+\beta_{\ell+k}}{4}\right)\phi_{xx}(k)\frac{u_{\ell+k}-2u_{\ell}+u_{\ell-k}}{a^{2}}-\sum_{k=1}^{N}\left(1-\frac{\beta_{\ell-k}+2\beta_{\ell}+\beta_{\ell+k}}{4}\right)\phi_{xx}(k)k^{2}u_{\ell}^{''}.\nonumber
\end{align}
\begin{remark}
As $\ell-k$ and $\ell+k$ are both employed in the term of the blending function, this blending operator is symmetric as in \cite{seleson2013aa}.

We also see that the first term recovers \eqref{def_AtomisticLinF} for $\beta \equiv 0$, and the second term recovers \eqref{def_ContinuumF} for $\beta\equiv 1$. Thus, the consistency error between the linearized $F_{\elaine{\ell,}}^{bqcf,lin}$ and $F_{\elaine{\ell,}}^{a,lin}$ is also of $O(a^2)$.
\end{remark}


\section{Stability Analysis for the Linearized Blending Model}

\subsection{Bilinear Form of Linearized Blending Model}
In this section we study the size of the blending region with respect to the $H^{1}$ stability of the blending operator. This is achieved by obtaining the optimal conditions in which the linearized coupling operator is positive definite under the discrete $H^{1}$ semi-norm.

From \eqref{def_linear_bqcf}, we consider the bilinear form
 \begin{equation}\label{def_bilinearform}
     \<F_{,}^{bqcf,lin}(u), v\> = \<F_{\elaine{,}1}^{bqcf,lin}(u), v\>+\sum_{k=2}^{N} \<F_{\elaine{,}k}^{bqcf,lin}(u), v\>,\quad \forall v\in\mathcal{U},
 \end{equation}
where the first neighbor interaction---the first term---is set apart due to its simplicity in analysis. The second term accounts for the next-nearest neighbor to the $N$-th nearest neighbor.

To observe the stability of the operator, we first look at the nearest and next-nearest neighbor interaction, for simplicity as well as \helen{for the coercivity assumption on $\phi_{xx}(1)>0$ and $\phi_{xx}(k)\le 0$ with $k\ge 2$}, to find the constraints on the size of  blending region. Then, we discuss how the next-nearest neighbor analysis can be extended to the general $N$-th neighbor interaction.

The discrete stability analysis is inspired by and similar to the analogous continuous analysis for the force-based operator that can be seen in the Appendix~ \ref{appendix}. We proceed with the analysis for the discrete case.

\subsection{Stability Analysis for  Next-Nearest Neighbor Interaction Range: $N=2$.}
\begin{lemma}\label{lemma:operators} For any displacements $u=(u_{\ell})_{\ell =-M+1}^{M}$ from the deformed configuration $y_{\ell}=x_\ell+u_\ell$, the bilinear forms of nearest neighbor and the next-nearest neighbor interaction operator can be written as
\begin{equation}
    \begin{split}
    \label{def_NeighborInteraction}
        &<F_{\elaine{,}1}^{bqcf,lin}(u),u>= \phi_{xx}(1)\norm{u^{'}}^{2}_{\ell_{2}} \\
        &<F_{\elaine{,}2}^{bqcf,lin}(u),u>=
       2\left\{ 2\phi_{xx}(2)\norm{u^{'}}_{\ell_{2}}^2-\frac{\phi_{xx}(2)}{2}a^{2}\norm{\sqrt{\beta}u^{''}}_{\ell_{2}}^{2}+\frac{\phi_{xx}(2)}{2}a^{2}\norm{\sqrt{|\beta^{''}|}u'}_{\ell_{2}}^{2}+R+S\right\}.
    \end{split}
\end{equation}
where \[R:=\sum_{\ell=-M+1}^{M}\frac{\phi_{xx}(2)}{2}\left(u_{\ell}^{'}\beta_{\ell-1}^{(3)}(u_{\ell-1})-u^{''}_{\ell}\beta^{''}_{\ell}u^{'}_{\ell}a\right)a^{3}\] and \[S:=\sum_{\ell=-M+1}^{M}\frac{\phi_{xx}(2)}{2}(u_{\ell}^{'})a^{2}\left(\beta'_{\ell}u_{\ell}^{'}-\beta_{\ell-2}^{'}u_{\ell-2}^{'}\right).\]
\end{lemma}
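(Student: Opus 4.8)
The plan is to evaluate each bilinear form directly from the definition \eqref{def_linear_bqcf} of $F^{bqcf,lin}_{\ell,k}$, using the periodic summation-by-parts identity of Lemma~\ref{lemma:summation} together with the discrete product (Leibniz) rule, and to organize the resulting sums so that the leading quadratic contributions produce the stated seminorms while the lower-order differences of $\beta$ are swept into $R$ and $S$. I would first dispatch the nearest-neighbour term. For $k=1$ the atomistic and continuum stencils coincide, $\tfrac{u_{\ell+1}-2u_\ell+u_{\ell-1}}{a^2}=u''_\ell=1^2\,u''_\ell$, so the weight $\tfrac{\beta_{\ell-1}+2\beta_\ell+\beta_{\ell+1}}{4}$ and its complement add back to one and $F^{bqcf,lin}_{\ell,1}(u)=-\phi_{xx}(1)u''_\ell$ independently of $\beta$. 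Writing $u''_\ell=\tfrac{u'_\ell-u'_{\ell-1}}{a}$ and applying the periodic form of Lemma~\ref{lemma:summation} with $v_\ell=u'_\ell$ gives $\sum_\ell u''_\ell u_\ell a = -\sum_\ell (u'_{\ell-1})^2 a = -\norm{u'}^2_{\ell_2}$, hence $\<F^{bqcf,lin}_{,1}(u),u\> = \phi_{xx}(1)\norm{u'}^2_{\ell_2}$, the first identity.

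For $k=2$ the key algebraic observation is that the two stencils differ by a discrete fourth difference, $\tfrac{u_{\ell+2}-2u_\ell+u_{\ell-2}}{a^2}-4u''_\ell = a^2 u^{(4)}_\ell$, and that the two blending weights combine so that
\[
F^{bqcf,lin}_{\ell,2}(u) = -4\phi_{xx}(2)\,u''_\ell - \phi_{xx}(2)\,\gamma_\ell\,a^2 u^{(4)}_\ell,\qquad \gamma_\ell := \tfrac{\beta_{\ell-2}+2\beta_\ell+\beta_{\ell+2}}{4} = \beta_\ell + \tfrac{a^2}{4}\tfrac{\beta_{\ell+2}-2\beta_\ell+\beta_{\ell-2}}{a^2}.
\]
Pairing with $u$, the first summand reproduces $-4\phi_{xx}(2)\sum_\ell u''_\ell u_\ell a = 4\phi_{xx}(2)\norm{u'}^2_{\ell_2}$ by the same summation-by-parts identity, accounting for the $2\cdot 2\phi_{xx}(2)\norm{u'}^2_{\ell_2}$ leading term. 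All the remaining work sits in the second summand, $-\phi_{xx}(2)a^3\sum_\ell \gamma_\ell u^{(4)}_\ell u_\ell$.

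Here I would use $u^{(4)}_\ell=(u'')''_\ell$ and apply Lemma~\ref{lemma:summation} twice to move two of the four differences off $u^{(4)}$ and onto the product $\gamma_\ell u_\ell$, arriving at $\sum_\ell (\gamma u)''_\ell\,u''_\ell$. Expanding $(\gamma u)''_\ell$ by the discrete Leibniz rule isolates the diagonal term $\sum_\ell \gamma_\ell (u''_\ell)^2$; replacing $\gamma_\ell$ by its leading part $\beta_\ell$ yields $\tfrac1a\norm{\sqrt{\beta}u''}^2_{\ell_2}$, and after the overall factor $-\phi_{xx}(2)a^3$ this is exactly $2\cdot\bigl(-\tfrac{\phi_{xx}(2)}2 a^2\norm{\sqrt\beta u''}^2_{\ell_2}\bigr)$, the second stated term.

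The remaining contributions are precisely those the statement packages into $R$ and $S$. The Leibniz remainders from $(\gamma u)''$ carry one or two differences of $\beta$, producing the $\beta''_\ell (u'_\ell)^2$ diagonal term---collected, with its sign fixed by $\phi_{xx}(2)\le 0$ and tracked through $|\beta''|$, into $+\tfrac{\phi_{xx}(2)}2 a^2\norm{\sqrt{|\beta''|}u'}^2_{\ell_2}$---together with the cross terms $u'_\ell\beta^{(3)}_{\ell-1}u_{\ell-1}$ and $u''_\ell\beta''_\ell u'_\ell a$ that constitute $R$; the extra $\tfrac{a^2}4$-weighted piece of $\gamma_\ell$, after the same two summations by parts, supplies the shifted first-difference terms $\beta'_\ell u'_\ell - \beta'_{\ell-2}u'_{\ell-2}$ that constitute $S$. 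I expect the main obstacle to be exactly this bookkeeping: performing the two summations by parts on a variable-coefficient fourth-order operator while keeping every index shift and every sign of the discrete product rule straight, and verifying that the leftover terms regroup \emph{verbatim} into the stated $R$ and $S$ rather than merely agreeing with them to leading order in $a$.
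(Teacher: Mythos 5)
Your proposal is correct in substance and shares the paper's core machinery --- for $k=1$ the weights sum to one and Lemma~\ref{lemma:summation} gives $\phi_{xx}(1)\norm{u'}_{\ell_2}^2$; for $k=2$ one splits off a $\beta$-free continuum part worth $4\phi_{xx}(2)\norm{u'}_{\ell_2}^2$ and reduces the rest to a blending-weighted fourth difference, which is summed by parts twice and expanded by the discrete Leibniz rule --- but your decomposition is genuinely different in one respect. You carry the full symmetric weight $\gamma_\ell=\tfrac{\beta_{\ell-2}+2\beta_\ell+\beta_{\ell+2}}{4}$ through the exact identity $F^{bqcf,lin}_{\ell,2}(u)=-4\phi_{xx}(2)u''_\ell-\phi_{xx}(2)\gamma_\ell a^2u^{(4)}_\ell$ and handle everything in one pass, whereas the paper splits $1=\tfrac14+\tfrac24+\tfrac14$, computes in full only the terms attached to $\beta_\ell$ (its $T=T_1+T_2$, then $T_{22}$), and obtains the prefactor $2$ and the displayed $R$, $S$ by asserting that the $\beta_{\ell\pm2}$ terms are ``treated similarly.'' Your route avoids that symmetry shortcut; the paper's route produces residuals whose literal index pattern is the one in the statement. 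The consequence --- which you flag yourself --- is that your remainders will not match the stated $R$ and $S$ verbatim: replacing $\gamma_\ell$ by $\beta_\ell$ in the diagonal leaves the extra residual $-\phi_{xx}(2)\tfrac{a^3}{4}\sum_\ell\big(\beta_{\ell+2}-2\beta_\ell+\beta_{\ell-2}\big)(u''_\ell)^2$, and the stride-two terms of $S$ actually arise in the paper from the double summation by parts of the $\beta_\ell$ cross terms, not from the $\gamma_\ell-\beta_\ell$ correction as you suggest. None of this is a substantive gap: the extra residual is bounded by $C L^{-2}\norm{u'}_{\ell_2}^2$ (use $\abs{\beta_{\ell+2}-2\beta_\ell+\beta_{\ell-2}}\lesssim a^2(La)^{-2}$ and $\norm{u''}_{\ell_2}\le 2a^{-1}\norm{u'}_{\ell_2}$), which is the same order as the $c_2L^{-2}$ term already present in the bound for $R$ in Lemma~\ref{lemma:errorbound}, and the paper's own $R$, $S$ are exact only for the $\beta_\ell$-half of the sum before the doubling. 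So either organization leaves Theorem~\ref{theorem:blendingregion} unchanged; if you write yours up, state the $k=2$ identity as leading terms plus residuals of the displayed structure and order, or absorb the $\gamma_\ell-\beta_\ell$ correction explicitly into $R$.
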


\begin{proof}
For the nearest neighbor interaction, as the $F^{a,lin}$ and $F^{c,lin}$ coincide, we have
\begin{equation*}\label{def_NearestOp}
\begin{split}
<F_{\elaine{,}1}^{bqcf,lin}(u),u> &=-\sum_{\ell=-M+1}^{M}\bigg[\phi_{xx}(1)a\left(\frac{u_{\ell+1}-2u_{\ell}+u_{\ell-1}}{a^{2}}\right) \bigg] u_{\ell} \\
&=-\sum_{\ell=-M+1}^{M}\phi_{xx}(1)u_{\ell}^{''}u_{\ell}a.
\end{split}
\end{equation*}
Then, using the discrete derivative and summation by parts formula from Lemma \ref{lemma:summation}, we conclude that
\begin{equation*}\label{def_Nearest}
    \begin{split}
        <F_{\elaine{,}1}^{bqcf,lin}(u),u>
        &=-\sum_{\ell=-M+1}^{M}\phi_{xx}(1)u_{\ell}^{''}u_{\ell} a
        =\phi_{xx}(1)\norm{u^{'}}_{\ell_{2}}^{2}.
    \end{split}
\end{equation*}
For, the next-nearest neighbor interaction, recall from \eqref{def_linear_bqcf} for the linearized blending operator $F^{bqcf,lin}_{\ell,2}$
\[
\begin{split}
F_{\ell,2}^{bqcf, lin}(u)
=&
-\big[\left(\frac{\beta_{\ell+k}+2\beta_{\ell}+\beta_{\ell-k}}{4}\right)
\phi_{xx}(k)\left(\frac{u_{\ell+k}-2u_{\ell}+u_{\ell-k}}{a^2}\right)\\
&\qquad+\left(1-\frac{\beta_{\ell+k}+2\beta_{\ell}+\beta_{\ell-k}}{4}\right)
\phi_{xx}(k)\left(\frac{u_{\ell+1}-2u_{\ell}+u_{\ell-1}}{a^2}\right)k^2 \big]\bigg|_{k=2},
\end{split}
\]
so, the bilinear form with test function $u$ is
\begin{equation}\label{bilinear_2nd}
\begin{aligned}
\< F^{bqcf, lin}_{\elaine{,}2}(u), u\>
=& \sum_{\ell=-M+1}^{M}  F^{bqcf, lin}_{\ell,2}(u) \cdot u_{\ell}a\\
=& \sum_{\ell=-M+1}^{M}\bigg\{ -
\left(\frac{\beta_{\ell+2}+2\beta_{\ell}+\beta_{\ell-2}}{4}\right)
\phi_{xx}(2)\left(\frac{u_{\ell+2}-2u_{\ell}+u_{\ell-2}}{a^2}\right)a\\
&\qquad -\left(1-\frac{\beta_{\ell+2}+2\beta_\ell+\beta_{\ell-2}}{4}\right)
\phi_{xx}(2)\left(\frac{u_{\ell+1}-2u_{\ell}+u_{\ell-1}}{a^2}\right)4a\bigg\}\cdot u_{\ell}.
\end{aligned}
\end{equation}
We particularly focus on terms contributed to $\beta_{\ell}$ as the other terms could be similarly treated. Hence, we divide the constant `1' in \eqref{bilinear_2nd} by $1=\frac{1}{4}+\frac{2}{4}+\frac{1}{4}$, collect terms contributed to $\beta_{\ell}$ and recall the finite difference defined in \eqref{def_FiniteDiff}, then we have
\begin{align}
    T:=&\frac{-\phi_{xx}(2)}{2}\bigg[
    \sum_{\ell=-M+1}^{M} \left(\frac{u_{\ell+2}-2u_{\ell}+u_{\ell-2} }{a}\right)\left(\beta_{\ell} u_{\ell}\right)
    +\left(1-\beta_{\ell}\right)
   \left(\frac{u_{\ell+1}-2u_{\ell}+u_{\ell-1}}{a}\right)4  u_{\ell}\bigg]\nonumber \\
    =& \frac{-\phi_{xx}(2)}{2}\bigg[\sum_{\ell=-M+1}^{M} \left(u^{'}_{\ell+1}+u^{'}_{\ell}-u^{'}_{\ell-1}-u^{'}_{\ell-2}\right)\left(\beta_{\ell} u_{\ell}\right)
    +\left(1-\beta_{\ell}\right)
    \left(u^{'}_{\ell}-u^{'}_{\ell-1}\right)4  u_{\ell} \bigg]\nonumber\\
    =&\frac{-\phi_{xx}(2)}{2}\bigg[\sum_{\ell=-M+1}^{M}\left(u^{'}_{\ell}-u^{'}_{\ell-1}\right)4u_{\ell}\bigg]\nonumber\\
   &\qquad +\frac{-\phi_{xx}(2)}{2}\sum_{\ell=-M+1}^{M}\left[
    \left(u^{'}_{\ell+1}-u^{'}_{\ell}\right)-2\left(u^{'}_{\ell}-u^{'}_{\ell-1}\right)+\left(u^{'}_{\ell-1}-u^{'}_{\ell-2}\right)
    \right]\cdot\big(\beta_{\ell} u_{\ell} \big)\nonumber\\
=:& T_1+T_2.\label{def_T1T2}
\end{align}
We consider $T_1$ first by using Lemma~\ref{lemma:summation}
\begin{equation}\label{eq:T1}
\begin{aligned}
    T_1&=\sum_{\ell=-M+1}^{M}\frac{-\phi_{xx}(2)}{2}
    \left(u^{'}_{\ell}-u^{'}_{\ell-1}\right)4u_{\ell}
    =\sum_{\ell=-M+1}^{M}\frac{\phi_{xx}(2)}{2}
    u^{'}_{\ell-1}4\left(u_{\ell}-u_{\ell-1}\right)\\
    &=\sum_{\ell=-M+1}^{M}\frac{\phi_{xx}(2)}{2}
    u^{'}_{\ell-1}4\left(u^{'}_{\ell-1}\right)a
    =2\phi_{xx}(2)\|u^{'}\|_{\ell_{2}}^2.
    \end{aligned}
\end{equation}
Treating $T_2$ is more tedious and is still mainly based on Lemma~\ref{lemma:summation}. We have
\begin{align*}
    T_2=&\sum_{\ell=-M+1}^{M}\frac{-\phi_{xx}(2)}{2}\left[
    \left(\big(u^{'}_{\ell+1}-u^{'}_{\ell}\big)-\big(u^{'}_{\ell}-u^{'}_{\ell-1}\big)\right)-
    \left(\big(u^{'}_{\ell}-u^{'}_{\ell-1}\big)-
    \big(u^{'}_{\ell-1}-u^{'}_{\ell-2}\big)\right)
    \right]\cdot\big(\beta_{\ell} u_{\ell} \big)\\
    =& \sum_{\ell=-M+1}^{M}\frac{\phi_{xx}(2)}{2}\left[
    \left(\big(u^{'}_{\ell+1}-u^{'}_{\ell}\big)-\big(u^{'}_{\ell}-u^{'}_{\ell-1}\big)\right)  \right]\cdot\big(\beta_{\ell+1} u_{\ell+1}-\beta_{\ell}u_{\ell} \big)\\
  =& \sum_{\ell=-M+1}^{M}\frac{\phi_{xx}(2)}{2}
    \big(u^{''}_{\ell}-u^{''}_{\ell-1}\big)a\cdot\big(\beta_{\ell+1} u_{\ell+1}-\beta_{\ell}u_{\ell} \big) \\
=& \sum_{\ell=-M+1}^{M}\frac{-\phi_{xx}(2)}{2}
    \big(u^{''}_{\ell}\big)a\cdot\left[\big(\beta_{\ell+1} u_{\ell+1}-\beta_{\ell}u_{\ell} \big)-\big(\beta_{\ell} u_{\ell}-\beta_{\ell-1}u_{\ell-1} \big)\right]\\
=&   \sum_{\ell=-M+1}^{M}\frac{-\phi_{xx}(2)}{2}
    \big(u^{''}_{\ell}\big)a\cdot\left[\left(\beta_{\ell+1} u_{\ell+1}-\beta_{\ell} u_{\ell+1}\right)+\left(\beta_{\ell} u_{\ell+1}-2\beta_{\ell}u_{\ell} +\beta_{\ell}u_{\ell-1}\right)-\left(\beta_{\ell}u_{\ell-1}-\beta_{\ell-1}u_{\ell-1} \right)\right]\\
=& \sum_{\ell=-M+1}^{M}\frac{-\phi_{xx}(2)}{2}
    \big(u^{''}_{\ell}\big)\cdot\beta_{\ell} u^{''}_{\ell}a^3
    +\frac{-\phi_{xx}(2)}{2}
    \big(u^{''}_{\ell}\big)\cdot\left[\beta^{'}_{\ell} u_{\ell+1}-\beta_{\ell-1}^{'}u_{\ell-1}\right]a^2\\
=&:-\frac{\phi_{xx}(2)}{2}a^2\|\sqrt{\beta} u^{''}\|_{\ell_{2}}^2
+ T_{22}.
\end{align*}
Now we mainly focus on $T_{22}$ term, which can be treated as
\[
\begin{aligned}
T_{22}=&\sum_{\ell=-M+1}^{M}\frac{-\phi_{xx}(2)}{2}
    \big(u^{'}_{\ell}-u^{'}_{\ell-1}\big)\cdot\left[\beta^{'}_{\ell} u_{\ell+1}-\beta_{\ell-1}^{'}u_{\ell-1}\right]a\\
=&\sum_{\ell=-M+1}^{M}\frac{\phi_{xx}(2)}{2}
    \big(u^{'}_{\ell}\big)\cdot\left[\left(\beta^{'}_{\ell} u_{\ell+1}-\beta_{\ell-1}^{'}u_{\ell-1}\right)-\left(\beta^{'}_{\ell-1} u_{\ell}-\beta_{\ell-2}^{'}u_{\ell-2}\right)\right]a.
    \end{aligned}
\]
We work on
\[
\begin{aligned}
&\left(\beta^{'}_{\ell} u_{\ell+1}-\beta_{\ell-1}^{'}u_{\ell-1}\right)-\left(\beta^{'}_{\ell-1} u_{\ell}-\beta_{\ell-2}^{'}u_{\ell-2}\right)\\
&\;=\beta^{'}_\ell(u_{\ell+1}-u_{\ell-1})
+u_{\ell-1}\left(\beta^{'}_{\ell}-2\beta^{'}_{\ell-1}+\beta^{'}_{\ell-2}\right)
+\beta^{'}_{\ell-1}(u_{\ell-1}-u_{\ell})
+\beta^{'}_{\ell-2}(u_{\ell-2}-u_{\ell-1})\\
&\,=\beta^{'}_{\ell}(u'_{\ell}+u'_{\ell-1})a+u_{\ell-1}\beta_{\ell-1}^{(3)}a^2-\beta^{'}_{\ell-1}u'_{\ell-1}a-\beta'_{\ell-2}u'_{\ell-2}a.
\end{aligned}
\]
Then we plug into $T_{22}$ to get
\[
\begin{aligned}
T_{22}=&\sum_{\ell=-M+1}^{M}\frac{\phi_{xx}(2)}{2}    \big(u^{'}_{\ell}\big)\cdot\bigg[\beta^{'}_{\ell}(u'_{\ell}+u'_{\ell-1})a+u_{\ell-1}\beta_{\ell-1}^{(3)}a^2-\beta^{'}_{\ell-1}u'_{\ell-1}a-\beta'_{\ell-2}u'_{\ell-2}a\bigg]a\\
=&\frac{\phi_{xx}(2)}{2}\sum_{\ell=-M+1}^{M}\big(u'_{\ell}\big)\left( \bigg[u_{\ell-1}\beta^{(3)}_{\ell-1}a^2+\left(\beta'_{\ell}u'_{\ell-1}-\beta'_{\ell-1}u'_{\ell-1}\right)a\bigg]a
+ \bigg[\beta'_{\ell}u'_{\ell}-\beta'_{\ell-2}u'_{\ell-2}\bigg]a^2\right) \\
=&\frac{\phi_{xx}(2)}{2}\sum_{\ell=-M+1}^{M}\bigg(\big(u^{'}_{\ell}\big)
\beta^{(3)}_{\ell-1}\left(u_{\ell-1}\right)+\big(u'_{\ell}\big)^2\beta^{''}_{\ell}
-u''_{\ell}\beta^{''}_{\ell}u'_{\ell}a \bigg)a^3\\
&\,+\frac{\phi_{xx}(2)}{2}
\sum_{\ell=-M+1}^{M}
\bigg(\beta'_{\ell}u'_{\ell}
-\beta'_{\ell-2}u'_{\ell-2}\bigg)u'_{\ell}a^2.
\end{aligned}
\]
Summarizing all terms $T_{1}$, $T_{2}$, $T_{22}$ for terms belong to $\beta_{\ell}$ in \eqref{bilinear_2nd}, and treat those for $\beta_{\ell-2}$ and $\beta_{\ell+2}$ in a similar way, we get \eqref{def_NeighborInteraction} and $R$, $S$ terms.
\end{proof}
\begin{remark}
The terms $R$ and $S$ from above are viewed as ``residual term''. Thus, we will estimate their bounds controlled by the the support of $\beta'_{\ell}$, the size of blending region, in pursuit of the positive-definiteness of the bilinear form.
\end{remark}


\begin{lemma}\label{lemma:errorbound} Let $R$, $S$ be defined as above, then we have the following estimates
\begin{equation}
    \begin{split}
    \label{def_ErrorBounds}
        &\abs{R}\leq \frac{\left(-\phi_{xx}(2)\right)}{2}\bigg(c_{3}L^{-\frac{5}{2}}a^{-\frac{1}{2}}
        +2c_{2}(L)^{-2}\bigg)\norm{u^{'}}_{\ell_{2}}^{2}, \\
        &\abs{S}\leq
         \big(-\phi_{xx}(2)\big) c_1(L)^{-1}\|u'\|_{\ell_2}^2,
    \end{split}
\end{equation}
where $L$ is the number of atoms within the blending region $\Omega_b$, $a=\frac{1}{M}$ being the lattice spacing, and $2M$ being the total number of atoms within the periodic domain $\Omega =(-1, 1]$, \elaine{and the constants $c_1$, $c_2$ and $c_3$ depend on $\beta^{(j)}$ with $j=1,\dots, 3$, respectively.}
\end{lemma}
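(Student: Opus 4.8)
The plan is to estimate $R$ and $S$ termwise, exploiting two structural facts. First, since $\phi_{xx}(2)\le 0$ by assumption, we have $-\phi_{xx}(2)=\abs{\phi_{xx}(2)}\ge 0$, so it suffices to bound the absolute values of the sums defining $R$ and $S$ and then reinstate the common prefactor $\tfrac{-\phi_{xx}(2)}{2}$. Second, the blending function transitions from $1$ to $0$ across a region containing $L$ atoms, i.e.\ of physical width $La$; writing $\beta$ as a fixed spline profile reparametrized to this width gives the pointwise bounds $\abs{\beta^{'}_{\ell}}\le c_1(La)^{-1}$, $\abs{\beta^{''}_{\ell}}\le c_2(La)^{-2}$, $\abs{\beta^{(3)}_{\ell}}\le c_3(La)^{-3}$, with $c_j$ depending only on $\beta^{(j)}$, and with each $\beta^{(j)}_{\ell}$ supported on the $O(L)$ atoms of $\Omega^{b}$. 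Beyond these I would only use Cauchy--Schwarz, Young's inequality, the identity $\sum_{\ell}(u'_\ell)^2=a^{-1}\norm{u'}_{\ell_2}^2$, the discrete inverse estimate $\norm{u''}_{\ell_2}\le 2a^{-1}\norm{u'}_{\ell_2}$ (immediate from $u''_\ell=(u'_\ell-u'_{\ell-1})/a$), and the discrete Sobolev/Poincaré embedding $\norm{u}_{\ell^\infty}\le C\norm{u'}_{\ell_2}$ valid for the mean-zero representative of $u$.

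For $S$, I would expand $S=\tfrac{\phi_{xx}(2)}{2}a^2\sum_\ell\big(\beta'_\ell(u'_\ell)^2-\beta'_{\ell-2}u'_{\ell-2}u'_\ell\big)$, apply the triangle inequality, bound each $\abs{\beta'}$ by $c_1(La)^{-1}$, and use $\abs{u'_{\ell-2}u'_\ell}\le\tfrac12\big((u'_{\ell-2})^2+(u'_\ell)^2\big)$ on the cross term. Every resulting sum reduces to $\sum_\ell(u'_\ell)^2=a^{-1}\norm{u'}_{\ell_2}^2$, so the prefactor collapses as $a^2\cdot(La)^{-1}\cdot a^{-1}=L^{-1}$, giving $\abs{S}\le(-\phi_{xx}(2))\,c_1 L^{-1}\norm{u'}_{\ell_2}^2$ once the numerical constant is absorbed into $c_1$.

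For $R$ I would split it into its two summands,
\[
R=R_1+R_2,\qquad R_1=\tfrac{\phi_{xx}(2)}{2}a^3\sum_\ell u'_\ell\,\beta^{(3)}_{\ell-1}\,u_{\ell-1},\qquad R_2=-\tfrac{\phi_{xx}(2)}{2}a^4\sum_\ell u''_\ell\,\beta''_\ell\,u'_\ell .
\]
The term $R_2$ is routine: bounding $\abs{\beta''}\le c_2(La)^{-2}$ and using Cauchy--Schwarz together with the inverse estimate and the identity above yields $\sum_\ell\abs{u''_\ell}\abs{u'_\ell}\le 2a^{-2}\norm{u'}_{\ell_2}^2$, so the prefactor gives $a^4\cdot(La)^{-2}\cdot a^{-2}=L^{-2}$ and hence $\abs{R_2}\le\tfrac{-\phi_{xx}(2)}{2}\cdot 2c_2 L^{-2}\norm{u'}_{\ell_2}^2$, the second term of the claimed bound. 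For $R_1$, I would bound $\abs{\beta^{(3)}_{\ell-1}}\le c_3(La)^{-3}$, restrict the sum to $\Omega^{b}$ (where $\beta^{(3)}$ is supported), apply Cauchy--Schwarz over the $O(L)$ atoms to get $\sum_{\ell\in\Omega^{b}}\abs{u'_\ell}\le L^{1/2}a^{-1/2}\norm{u'}_{\ell_2}$, and control the bare displacement by $\abs{u_{\ell-1}}\le\norm{u}_{\ell^\infty}\le C\norm{u'}_{\ell_2}$. The prefactor then scales as $a^3\cdot(La)^{-3}\cdot L^{1/2}a^{-1/2}=L^{-5/2}a^{-1/2}$, producing the first term; combining $\abs{R}\le\abs{R_1}+\abs{R_2}$ gives exactly \eqref{def_ErrorBounds}.

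The main obstacle is precisely the $R_1$ term, because it involves the \emph{bare} displacement $u_{\ell-1}$ rather than a difference, and so is not controlled by $\norm{u'}_{\ell_2}$ by elementary means. Two points must be handled here: one must invoke the discrete Sobolev embedding $\norm{u}_{\ell^\infty}\le C\norm{u'}_{\ell_2}$, which in turn requires normalizing $u$ to be mean-zero. This normalization is legitimate since the bilinear form $\<F^{bqcf,lin}(u),u\>$ is invariant under rigid translations $u\mapsto u+\mathrm{const}$ (the force depends only on displacement differences and sums to zero), so the stability question is posed modulo constants; fixing the mean-zero representative is therefore harmless. This step is also the source of the otherwise unusual $L^{-5/2}a^{-1/2}$ scaling, and it is the only genuinely non-routine ingredient in the argument.
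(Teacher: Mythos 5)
Your proposal is correct and follows essentially the same route as the paper: the paper also bounds $R$ and $S$ termwise using $\norm{\beta^{(j)}}_{\ell^\infty}\le c_j(La)^{-j}$, the support of $\beta^{(j)}$ on the $L$ atoms of $\Omega^{b}$, the inverse estimate $\norm{u''}_{\ell_2}\le \frac{2}{a}\norm{u'}_{\ell_2}$, and H\"older's inequality, arriving at exactly the same $L^{-5/2}a^{-1/2}$, $L^{-2}$, and $L^{-1}$ scalings. Your treatment of the bare-displacement term via $\norm{u}_{\ell^\infty}\le C\norm{u'}_{\ell_2}$ plus Cauchy--Schwarz over the $O(L)$ atoms is the same estimate the paper packages as the discrete Poincar\'e inequality $\norm{u}_{\ell_2(\Omega^b)}\le (La)^{1/2}\norm{u'}_{\ell_2}$, and your explicit remark that this requires fixing the mean-zero representative (legitimate by translation invariance of the bilinear form) makes precise a normalization the paper leaves implicit.
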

\begin{proof}
Recall that \[R=\sum_{\ell=-M+1}^{M}\frac{\phi_{xx}(2)}{2}\left(u_{\ell}^{'}\beta_{\ell-1}^{(3)}(u_{\ell-1})-u^{''}_{\ell}\beta^{''}_{\ell}u^{'}_{\ell}a\right)a^{3}.\]
\helen{Also notice that the finite differences of $\beta$ are nonzero only on $\Omega_b$}, so utilizing the fact that $\phi_{xx}(2)\leq0$ and H{\"o}lder's Inequality,  we get
\begin{equation*}
    \begin{split}
        \abs{R}&\le \abs{\sum_{\ell \in \Omega^{b}}\frac{\phi_{xx}(2)}{2}(u_{\ell}^{'})\left(\beta_{\ell}^{(3)}(u_{\ell-1})\right)a^{3}}
        +\abs{\sum_{\ell \in \Omega^{b}}
        \frac{\phi_{xx}(2)}{2}u''_{\ell}\beta''_{\ell}u'_{\ell}a^4} \\
        &\leq \frac{-\phi_{xx}(2)}{2}\norm{\beta^{(3)}}_{\ell^\infty}\norm{u^{'}}_{\ell_{2}(\Omega^{b})} \norm{u}_{\ell_{2}(\Omega^{b})}a^{2}+\frac{-\phi_{xx}(2)}{2}\|\beta^{(2)}\|_{\ell^{\infty}}\|u''\|_{\ell_2(\Omega^b)}\, \|u'\|_{\ell_2(\Omega^b)}a^3.
    \end{split}
\end{equation*}
Also notice that $|\Omega_b|=La$ and by the discrete  Poincar{\'e} Inequality,  we have $\norm{u}_{\ell_{2(\Omega^{b})}}\leq(La)^{\frac{1}{2}}\norm{u^{'}}_{\ell_{2}}$.
In addition, we use the fact that $\norm{\beta^{(j)}}_{\ell^\infty}\leq c \,(La)^{-j}$. Then
\begin{equation*}
    \begin{split}
        \abs{R} &\leq 
        \frac{-\phi_{xx}(2)}{2}\norm{\beta^{(3)}}_{\ell^\infty}\norm{u^{'}}_{\ell_{2}(\Omega^{b})} \norm{u}_{\ell_{2}(\Omega^{b})}a^{2}+\frac{-\phi_{xx}(2)}{2}\|\beta^{(2)}\|_{\ell^{\infty}}\|u''\|_{\ell_2(\Omega^b)}\, \|u'\|_{\ell_2(\Omega^b)}a^3\\
        & \leq \frac{-\phi_{xx}(2)}{2}c_{3}(La)^{-3}(La)^{\frac{1}{2}}\norm{u^{'}}_{\ell_2}^{2}a^{2}
        +\frac{-\phi_{xx}(2)}{2}c_{2}(La)^{-2}\frac{2}{a}\|u'\|^2_{\ell_2 }a^3
        \\
        & \leq \frac{-\phi_{xx}(2)}{2}\bigg(c_{3}L^{-\frac{5}{2}}a^{-\frac{1}{2}}
        +2c_{2}(L)^{-2}\bigg)\norm{u^{'}}_{\ell_{2}}^{2}.
    \end{split}
\end{equation*}
Next, we bound $S$. Recall that  \[S:=\sum_{\ell=-M+1}^{M}\frac{\phi_{xx}(2)}{2}(u_{\ell}^{'})a^{2}\left(\beta'_{\ell}u_{\ell}^{'}-\beta_{\ell-2}^{'}u_{\ell-2}^{'}\right).\] Utilizing similar inequalities, we obtain
\begin{equation*}
    \begin{split}\label{def_SBound}
        \abs{S} &
        \leq \frac{-\phi_{xx}(2)}{2}\|\beta'\|_{\ell^\infty}\cdot 2\|u'\|_{\ell_2}^2a
        \leq\frac{-\phi_{xx}(2)}{2} c_1(La)^{-1}\cdot 2\|u'\|_{\ell_2}^2a= \big(-\phi_{xx}(2)\big) c_1(L)^{-1}\|u'\|_{\ell_2}^2.
    \end{split}
\end{equation*}
Hence, we prove the lemma.
\end{proof}

\begin{theorem}\label{theorem:blendingregion} \elaine{Suppose that the number of atoms $M$ within the chain model is sufficiently large, or equivalently the lattice spacing $a=\frac{1}{M}$ is sufficiently small; also suppose that the fully atomistic model with next-nearest-neighbor interaction $N=2$, is stable so that $[\phi_{xx}(1)+4\phi_{xx}(2)]>0$.} Let $L$ denote the number of atoms within the blending region and let the blending function $\beta$ be  sufficiently smooth such that $\norm{\beta^{(j)}}_{\infty}\leq(La)^{-j}$. Then there exists a positive constant \elaine{$\widetilde{C}$} such that
\begin{equation}
    \elaine{
   \helen{\sum_{k=1}^{2} \sum_{\ell=-M+1}^{M}
    \<F^{bqcf,lin}_{\ell,k}}u,u\>  } \geq 
    \elaine{\widetilde{C}}\norm{u^{'}}_{\ell_{2}}^{2}
\end{equation}
\elaine{$\widetilde{C}$} is strictly bounded above zero as $a=\frac{1}{M}\rightarrow0$.
\end{theorem}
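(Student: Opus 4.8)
The plan is to sum the two bilinear forms computed in Lemma~\ref{lemma:operators}, isolate the manifestly positive leading term, and absorb every remaining contribution into the residual estimates of Lemma~\ref{lemma:errorbound} together with the assumed decay $\norm{\beta^{(j)}}_{\infty}\le(La)^{-j}$. Adding \eqref{def_NeighborInteraction} over $k=1,2$ gives
\begin{equation*}
\begin{split}
\sum_{k=1}^{2}\<F^{bqcf,lin}_{,k}(u),u\> =& [\phi_{xx}(1)+4\phi_{xx}(2)]\norm{u^{'}}_{\ell_{2}}^{2} - \phi_{xx}(2)a^{2}\norm{\sqrt{\beta}u^{''}}_{\ell_{2}}^{2} \\
& + \phi_{xx}(2)a^{2}\norm{\sqrt{|\beta^{''}|}u^{'}}_{\ell_{2}}^{2} + 2R + 2S.
\end{split}
\end{equation*}
The coefficient $\phi_{xx}(1)+4\phi_{xx}(2)$ of the leading term is strictly positive by the standing stability hypothesis, and since $\phi_{xx}(2)\le0$ the second term $-\phi_{xx}(2)a^{2}\norm{\sqrt{\beta}u^{''}}_{\ell_{2}}^{2}$ is non-negative and may simply be discarded when seeking a lower bound. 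This leaves three signed error terms to control.

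For the $\beta^{''}$ term I would use $\norm{\sqrt{|\beta^{''}|}u^{'}}_{\ell_{2}}^{2}\le\norm{\beta^{''}}_{\ell^{\infty}}\norm{u^{'}}_{\ell_{2}}^{2}$ together with the hypothesis $\norm{\beta^{''}}_{\ell^{\infty}}\le(La)^{-2}$, so that $|\phi_{xx}(2)|\,a^{2}\norm{\sqrt{|\beta^{''}|}u^{'}}_{\ell_{2}}^{2}\le|\phi_{xx}(2)|L^{-2}\norm{u^{'}}_{\ell_{2}}^{2}$; note the factor $a^{2}$ exactly cancels the $a^{-2}$ produced by two differences of $\beta$. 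For $R$ and $S$ I would invoke \eqref{def_ErrorBounds} directly. Collecting everything and recalling $|\phi_{xx}(2)|=-\phi_{xx}(2)$ yields the clean lower bound
\begin{equation*}
\begin{split}
\sum_{k=1}^{2}\<F^{bqcf,lin}_{,k}(u),u\> \ge \Big\{&[\phi_{xx}(1)+4\phi_{xx}(2)] \\
&- |\phi_{xx}(2)|\big(2c_1 L^{-1}+(1+2c_2)L^{-2}+c_3 L^{-5/2}a^{-1/2}\big)\Big\}\norm{u^{'}}_{\ell_{2}}^{2},
\end{split}
\end{equation*}
so that $\widetilde{C}$ is precisely the bracketed quantity.

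It then remains to verify that this bracket is bounded away from zero. The factors $L^{-1}$ and $L^{-2}$ vanish as soon as $L\to\infty$, so the only genuine obstacle is the term $c_3 L^{-5/2}a^{-1/2}=c_3 L^{-5/2}M^{1/2}$ inherited from the third-difference part of $R$: were $L$ held fixed this term would blow up as $a\to0$, destroying coercivity. The decisive step is therefore to impose a growth condition linking $L$ to $M$, namely $L^{5}a\to\infty$ (equivalently $L\gg M^{1/5}$), under which $L^{-5/2}a^{-1/2}\to0$ and all three error factors tend to zero. Then for every sufficiently small $a$ the bracket exceeds, say, $\tfrac12[\phi_{xx}(1)+4\phi_{xx}(2)]>0$, giving a constant $\widetilde{C}$ strictly bounded above zero as $a\to0$. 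I expect the scaling analysis of the $R$-residual to be the crux: it is exactly what forces a nontrivial (yet still asymptotically vanishing) blending width, and it is presumably sharpened into the explicit optimal size established in Theorem~\ref{corollary: blendsize}.
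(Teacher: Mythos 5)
Your proposal is correct and follows essentially the same route as the paper: sum the two bilinear forms from Lemma~\ref{lemma:operators}, discard the non-negative $-\phi_{xx}(2)a^{2}\norm{\sqrt{\beta}u^{''}}_{\ell_{2}}^{2}$ term, bound the $\beta^{''}$ term by $|\phi_{xx}(2)|L^{-2}\norm{u^{'}}_{\ell_{2}}^{2}$, control $R$ and $S$ via Lemma~\ref{lemma:errorbound}, and recognize that the $L^{-5/2}a^{-1/2}$ residual forces the blending-size condition $L\gtrsim M^{1/5}$, exactly as the paper concludes. The only cosmetic difference is that you require the error terms to vanish ($L\gg M^{1/5}$, giving $\widetilde{C}=\tfrac12[\phi_{xx}(1)+4\phi_{xx}(2)]$), whereas the paper merely keeps them bounded by a fixed constant with $L=\widehat{C}M^{1/5}$, yielding $\widetilde{C}=\phi_{xx}(1)+2\phi_{xx}(2)$; both choices are valid and identify the same optimal scaling.
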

\begin{proof}
\elaine{For $N=2$ and from Lemma \ref{lemma:operators}}, the blended force-based operator satisfies
\begin{equation}\label{def_operatorBounds}
\begin{aligned}
\elaine{\sum_{\ell=-M+1}^{M}<F_{\ell,2}^{bqcf,lin}(u),u>}= &
        \left(\phi_{xx}(1)+4\phi_{xx}(2)\right)\norm{u^{'}}_{\ell_{2}}^2-{\phi_{xx}(2)}a^{2}\norm{\sqrt{\beta}u^{''}}_{\ell_{2}}^{2}+{\phi_{xx}(2)}a^{2}\norm{\sqrt{\beta^{''}}u^{'}}_{\ell_{2}}^{2}\\
      & \qquad \qquad +2R+2S.
        \end{aligned}
\end{equation}

From Lemma \ref{lemma:errorbound}, we have
\begin{equation}\label{inequalities}
\begin{split}
    \abs{2R+2S}
    &\elaine{\leq} -\frac{\phi_{xx}(2)}{2}\left(\elaine{2c_{3}}L^{-\frac{5}{2}}a^{-\frac{1}{2}}+\elaine{4c_{2}}L^{-2}+\elaine{c_{1}}L^{-1}\right)\norm{u^{'}}_{\ell_{2}}^{2} \\
    &\elaine{\leq} -\frac{\phi_{xx}(2)}{2}\left(\elaine{c_{4}}L^{-\frac{5}{2}}a^{-\frac{1}{2}}\right)\norm{u^{'}}_{\ell_{2}}^{2},
    \end{split}
\end{equation}
hence, we have
\[
2R+2S\ge \frac{\phi_{xx}(2)}{2}\left(\elaine{c_{4}}L^{-\frac{5}{2}}a^{-\frac{1}{2}}\right)\norm{u^{'}}_{\ell_{2}}^{2}
\]
with the latter inequality following from $L^{-2}\le L^{-1} \leq L^{-\frac{5}{2}}a^{-\frac{1}{2}}$ \elaine{as $L^{-\frac{5}{2}}a^{-\frac{1}{2}}$ dominates the latter two terms when $a$ is sufficiently small. 
}

From \eqref{def_operatorBounds}, we want to observe the terms that do not favor the coercivity in the terms of the $H^{1}$ semi-norm.
For the second term, since  \elaine{$\phi_{xx}(2)\le 0$}, we have
\begin{equation*}
    -{\phi_{xx}(2)}a^{2}\norm{\sqrt{\beta}u^{''}}_{\ell_{2}}^{2} \geq 0,
\end{equation*} and thus it does not negatively contribute.
For the third term we observe that
\begin{equation*}
    {\phi_{xx}(2)}a^{2}\norm{\sqrt{\beta^{''}}u^{'}}_{\ell_{2}}^{2} \elaine{\geq} {\phi_{xx}(2)}\elaine{c_{5}}L^{-2}\norm{u^{'}}_{\ell_{2}}^{2}
\end{equation*}
because of  $\phi_{xx}(2) \leq 0$. \elaine{For up to $N=2$ neighbour interaction, we have from the coercivity of atomistic model that $\phi''(1)+4\phi''(2)>0$, hence, for the linearized force-blended model, we have}
\begin{equation*}
    \<F_{\elaine{,}1}^{bqcf,lin}+F_{\elaine{,}2}^{bqcf,lin},u\>
 \elaine{\geq} \left[\phi_{xx}(1)+{\phi_{xx}(2)}\elaine{\left( 4+c_4 L^{-\frac{5}{2}}a^{-\frac{1}{2}}+c_5 L^{-2}\right)} \right]\norm{u^{'}}_{\ell_{2}}^{2}
\elaine{\geq \widetilde{C}}\norm{u^{'}}^{2}_{\ell_{2}}
\end{equation*}
for \elaine{$\widetilde{C}=\left(\phi_{xx}(1)+2\phi_{xx}(2)\right)>0$ } strictly positive, and independent of $a\rightarrow 0$.

\noindent
Thus, we can conclude that the necessary and optimal blending size for coercivity is \elaine{$L^{-\frac{5}{2}}a^{-\frac{1}{2}}\lesssim 1/c_4$ that is $L= \widehat{C}a^{-\frac{1}{5}}=\widehat{C} M^{\frac{1}{5}}$ for some $\widehat{C}>0$.} 
\end{proof}
\subsection{\elaine{Stability Analysis for general $N$-th Nearest Neighbor Interaction Range}}
To observe the general form of the $N$-th neighbor interaction range, we notice that for $k=3,\dots,N$
\begin{equation}\label{bqcf_bilinear_genN}
\begin{aligned}
\< F^{bqcf, lin}_{,k}&(u), u\>\\
= &  -\phi_{xx}(k)\sum_{\ell=-M+1}^{M}\bigg\{  \left(\frac{\beta_{\ell+k}+2\beta_{\ell}+\beta_{\ell-k}}{4}\right)
\left(\frac{u_{\ell+k}-2u_{\ell}+u_{\ell-k}}{a^2}\right)\\
&\qquad +\left(1-\frac{\beta_{\ell+k}+2\beta_{\ell}+\beta_{\ell-k}}{4}\right)
k^2\left(\frac{u_{\ell+1}-2u_{\ell}+u_{\ell-1}}{a^2}\right) \bigg\}u_{\ell}a.
\end{aligned}
\end{equation}
The $k$-th neighbor interaction differs by having $\beta_{\ell\pm k}$ terms which are treated similarly to $\beta_{\ell}$ terms.  $\phi_{xx}(k)$ is a non-positive constant term for all $k\ge 2$.

Similarly to the previous subsection, we fix an interaction range $k$ and at the moment only consider all terms that contribute to $\beta_{\ell}$, thus we get
\[
\begin{aligned}
T:=&-\frac{\phi_{xx}(k)}{2}\sum_{\ell=-M+1}^{M}\left(
k^2\frac{u_{\ell+1}-2u_{\ell}+u_{\ell-1}}{a}
\right)(u_{\ell})\\
&\quad -\frac{\phi_{xx}(k)}{2}\sum_{\ell=-M+1}^{M}\left[\bigg(\frac{u_{\ell+k}-2u_{\ell}+u_{\ell-k}}{a}-
k^2\frac{u_{\ell+1}-2u_{\ell}+u_{\ell-1}}{a}\bigg)\big(\beta_{\ell}u_{\ell}\big)
\right]\\
=&T_{1}+T_{2}.
\end{aligned}
\]
For $T_1$, we similarly have
\begin{equation}\label{T1_genN}
\begin{aligned}
T_1=&-\frac{\phi_{xx}(k)}{2}\sum_{\ell=-M+1}^{M}\left(
k^2\frac{u_{\ell+1}-2u_{\ell}+u_{\ell-1}}{a}
\right)(u_{\ell})\\
=&-\frac{\phi_{xx}(k)}{2}\sum_{\ell=-M+1}^{M}k^2\left(u'_{\ell}-u'_{\ell-1}\right) u_{\ell}= \frac{k^2}{2}\phi_{xx}(k)\|u'\|^2_{\ell_2}.
\end{aligned}
\end{equation}
For $T_2$, the simplification is more difficult, we have
\begin{equation*}
\begin{aligned}
T_{2}=&-\frac{\phi_{xx}(k)}{2}\sum_{\ell=-M+1}^{M}\left[\bigg(\frac{u_{\ell+k}-2u_{\ell}+u_{\ell-k}}{a}-
k^2\frac{u_{\ell+1}-2u_{\ell}+u_{\ell-1}}{a}\bigg)\big(\beta_{\ell}u_{\ell}\big)
\right]\\
=& -\frac{\phi_{xx}(k)}{2}\sum_{\ell=-M+1}^{M}\left[\bigg( \sum_{j=1}^{k}\sum_{s=1}^{k}u''_{\ell-j+s}\bigg)-
k^2u''_{\ell}\right]\big(a\beta_{\ell}u_{\ell}\big)\\
=& -\frac{\phi_{xx}(k)}{2}\sum_{\ell=-M+1}^{M}\left[ \sum_{j=1}^{k}\sum_{s=1}^{k}\bigg(u''_{\ell-j+s}-
u''_{\ell}\bigg)\right]\big(a\beta_{\ell}u_{\ell}\big).
\end{aligned}
\end{equation*}
Due to the exact symmetry of $j$ and $s$, we have
\[
\sum_{j=1}^{k}\sum_{s=1}^{k}u''_{\ell-j+s}
=\sum_{j=1}^{k}\sum_{s=1}^{k}u''_{\ell-s+j}.
\]
Hence, $T_2$ can be converted into a symmetrical form
\begin{equation}\label{T2_genN}
\begin{aligned}
T_2
=& -\frac{\phi_{xx}(k)}{2}\sum_{\ell=-M+1}^{M}\left[ \sum_{j=1}^{k}\sum_{s=1}^{k}\bigg(u''_{\ell-j+s}-
u''_{\ell}\bigg)\right]\big(a\beta_{\ell}u_{\ell}\big)\\
=&-\frac{\phi_{xx}(k)}{4}\sum_{\ell=-M+1}^{M}\left[ \sum_{j=1}^{k}\sum_{s=1}^{k}\bigg(u''_{\ell-j+s}-
2u''_{\ell}+u''_{\ell+j-s}\bigg)\right]\big(a\beta_{\ell}u_{\ell}\big)\\
=&-\frac{\phi_{xx}(k)}{4}\sum_{\ell=-M+1}^{M}\left[ \sum_{j=1}^{k}\sum_{s=1}^{k}\big(u''_{\ell-j+s}
-u''_{\ell}\big)-\big(u''_{\ell}-u''_{\ell+j-s}\big)\right]\big(a\beta_{\ell}u_{\ell}\big).
\end{aligned}
\end{equation}
Therefore by the discrete summation by parts formula, we have
\begin{equation}\label{T2_genN_eq2}
\begin{aligned}
T_2=&\frac{\phi_{xx}(k)}{4}\sum_{\ell=-M+1}^{M}\left[ \sum_{j=1}^{k}\sum_{s=1}^{k}\big(u''_{\ell-j+s}
-u''_{\ell}\big)\right]a\big(\beta_{\ell-j+s}u_{\ell-j+s}-\beta_{\ell}u_{\ell}\big)\\
=&\frac{-\phi_{xx}(k)}{4}\sum_{\ell=-M+1}^{M}\left[ \sum_{j=1}^{k}\sum_{s=1}^{k} u''_{\ell}a\right]\bigg(\big(\beta_{\ell-j+s}u_{\ell-j+s}-\beta_{\ell}u_{\ell}\big)-\big(\beta_{\ell}u_{\ell}-\beta_{\ell+j-s}u_{\ell+j-s}\big)\bigg)\\
=&\frac{-\phi_{xx}(k)}{4}\sum_{\ell=-M+1}^{M}\left[ \sum_{j=1}^{k}\sum_{s=1}^{k} u''_{\ell}a\right]\bigg( \beta_{\ell-j+s}u_{\ell-j+s}-2\beta_{\ell}u_{\ell}+\beta_{\ell+j-s}u_{\ell+j-s}\bigg).
\end{aligned}
\end{equation}
We can carefully work on the symmetrical term
\[
\begin{aligned}
\bigg( \beta_{\ell-j+s}u_{\ell-j+s}-&2\beta_{\ell}u_{\ell}+\beta_{\ell+j-s}u_{\ell+j-s}\bigg)\stackrel{r:=j-s}{=} \bigg( \beta_{\ell-r}u_{\ell-r}-2\beta_{\ell}u_{\ell}+\beta_{\ell+r}u_{\ell+r}\bigg)\\
&=\beta_{\ell-r}u_{\ell-r}-\beta_{\ell}u_{\ell+1}+\big(\beta_{\ell}u_{\ell+1}-2\beta_{\ell}u_{\ell}+\beta_{\ell}u_{\ell-1}\big)-\beta_{\ell}u_{\ell-1}+\beta_{\ell+r}u_{\ell+r}\\
&= \beta_{\ell}u''_{\ell}a^2+\beta_{\ell-r}u_{\ell-r}-\beta_{\ell}u_{\ell+1}+\beta_{\ell+r}u_{\ell+r}-\beta_{\ell}u_{\ell-1}.
\end{aligned}
\]
Without loss of generality, we assume $r>0$, then
\begin{equation}\label{beta_genN_eq1}
\begin{aligned}
\bigg( &\beta_{\ell-r}u_{\ell-r}-2\beta_{\ell}u_{\ell}+\beta_{\ell+r}u_{\ell+r}\bigg)
=\beta_{\ell}u''_{\ell}a^2+\beta_{\ell-r}u_{\ell-r}-\beta_{\ell}u_{\ell+1}+\beta_{\ell+r}u_{\ell+r}-\beta_{\ell}u_{\ell-1}\\
&=\beta_{\ell}u''_{\ell}a^2 +\big(\beta_{\ell}u_{\ell-r}-\beta_{\ell}u_{\ell-1}\big)+\big(\beta_{\ell}u_{\ell+r}-\beta_{\ell}u_{\ell+1}\big)+u_{\ell+r}\sum_{t=0}^{r-1}\beta'_{\ell+t}a-u_{\ell-r}\sum_{t=0}^{r-1}\beta'_{\ell-r+t}a.
\end{aligned}
\end{equation}
Therefore, we can handle $T_2$ for general $k$-th-neighbor-interaction range in a similar way to that for the case of $k=2$, and obtain that
\[
|T_{2}|\le (-\phi_{xx}(k)) \big(|\widetilde{c}_4| L^{-\frac{5}{2}}a^{-\frac{1}{2}}+|\widetilde{c}_5|L^{-2}\big)\|u'\|_{\ell^2}^2
\]
which suggests that for $k\ge 2$
\[
T_{2}\ge (\phi_{xx}(k)) \big(\widetilde{c}_4 L^{-\frac{5}{2}}a^{-\frac{1}{2}}+\widetilde{c}_5L^{-2}\big)\|u'\|_{\ell^2}^2.
\]
Combining with $T_1$ estimate \eqref{T1_genN}, we have for any $k\ge 2$
\begin{equation}\label{bqcf_bilinear_genN_eq2}
\begin{aligned}
\< F^{bqcf, lin}_{,k}&(u), u\>
\ge \phi_{xx}(k)\left(k^2+\widehat{C}_{k} L^{-\frac{5}{2}}a^{-\frac{1}{2}}+\widehat{C}_{k}L^{-2}\right)\|u'\|_{\ell^2}^2
\end{aligned}
\end{equation}
with $L=\widetilde{C}a^{-\frac{1}{5}}$ when $a=\frac{1}{M}$ being sufficiently small.

Thus, collecting all interactions up to the $N$-the Nearest Neighbor Interaction Range, we have
\[
\begin{split}
   \<F_{,}^{bqcf,lin}(u), u\> =& \<F_{\elaine{,}1}^{bqcf,lin}(u), u\>+\sum_{k=2}^{N} \<F_{\elaine{,}k}^{bqcf,lin}(u), u\>\\
  \ge& \left[\phi_{xx}(1)+\sum_{k=2}^{N} \phi_{xx}(k)\big(k^2+\widehat{C}_{k}L^{-\frac{5}{2}}a^{-\frac{1}{2}}\big)\right]\|u'\|_{\ell^2}^2.
   \end{split}
\]
\helen{Hence, similar to Theorem~\ref{theorem:blendingregion}, we summarize the stability conditions on the blending size for a general atomistic chain with $N$-th Nearest Neighbour interaction in the following theorem.}
\begin{theorem}\label{corollary: blendsize}
Suppose that the number of atoms $M$ is sufficiently large, which is equivalent to $a=\frac{1}{M}$ being sufficiently small, and the blending function $\beta$ is sufficiently smooth. Also, we assume that the fully atomistic model is stable so that \[\big[\phi_{xx}(1)+\sum_{k=1}^{N}k^2\phi_{xx}(k)\big]>0.\] If the blending size $L$ satisfies \elaine{$L=\widetilde{C} a^{-\frac{1}{5}}$}, then the linear B-QCF operator $F_{,}^{bqcf,lin}$ is positive-definite in terms of the $H^{1}$ semi-norm
\begin{equation}
    \elaine{
   \helen{\sum_{k=1}^{N} \sum_{\ell=-M+1}^{M}
    \<F^{bqcf,lin}_{\ell,k}}u,u\>  } \geq 
    \elaine{\widetilde{C}}\norm{u^{'}}_{\ell_{2}}^{2},
\end{equation}
{where $\widetilde{C}$} is strictly bounded above zero as $a=\frac{1}{M}\rightarrow0$.
\end{theorem}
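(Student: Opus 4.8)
The plan is to reduce the full $N$-neighbor estimate to the single-shell bounds already assembled in the preceding subsection, so that the theorem follows by summing a \emph{finite} collection of per-$k$ inequalities and then selecting the blending width to absorb the aggregate residual. First I would split the total bilinear form along interaction shells exactly as in \eqref{def_bilinearform},
\[
\<F_{,}^{bqcf,lin}(u),u\> = \<F_{,1}^{bqcf,lin}(u),u\> + \sum_{k=2}^{N}\<F_{,k}^{bqcf,lin}(u),u\>.
\]
The $k=1$ shell is exact, since there $F^{a,lin}$ and $F^{c,lin}$ coincide, and Lemma~\ref{lemma:operators} gives $\<F_{,1}^{bqcf,lin}(u),u\>=\phi_{xx}(1)\norm{u'}_{\ell_2}^2$ with no residual. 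For each $k\ge 2$ I invoke the single-shell estimate \eqref{bqcf_bilinear_genN_eq2}, which I take as established: writing $T=T_1+T_2$, the term $T_1$ reproduces the classical coefficient $k^2\phi_{xx}(k)\norm{u'}_{\ell_2}^2$ via \eqref{T1_genN}, while $T_2$—after the symmetrization in $j,s$ and the telescoping in \eqref{beta_genN_eq1}—collects into $\beta u''$- and $\beta'$-weighted pieces whose $\ell^\infty$ bounds on $\beta^{(j)}$ combine with the discrete Poincar\'e and inverse inequalities precisely as in Lemma~\ref{lemma:errorbound}, yielding $\<F_{,k}^{bqcf,lin}(u),u\>\ge \phi_{xx}(k)\big(k^2+\widehat{C}_k L^{-5/2}a^{-1/2}+\widehat{C}_kL^{-2}\big)\norm{u'}_{\ell_2}^2$.

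Next I would sum these shells. The leading coefficient is $\phi_{xx}(1)+\sum_{k=2}^{N}k^2\phi_{xx}(k)$, which coincides with $\sum_{k=1}^{N}k^2\phi_{xx}(k)$ and is strictly positive by the assumed stability of the fully atomistic model (the direct generalization of $\phi_{xx}(1)+4\phi_{xx}(2)>0$ used for $N=2$ in Theorem~\ref{theorem:blendingregion}). The residual coefficient is $\sum_{k=2}^{N}\phi_{xx}(k)\widehat{C}_k\big(L^{-5/2}a^{-1/2}+L^{-2}\big)$; because $N$ is \emph{fixed} and each $\phi_{xx}(k)$ is a bounded nonpositive constant, this is dominated in absolute value by $C_{\mathrm{tot}}\,L^{-5/2}a^{-1/2}\norm{u'}_{\ell_2}^2$ once one uses $L^{-2}\le L^{-5/2}a^{-1/2}$ for small $a$. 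The decisive step is then the choice $L=\widetilde{C}a^{-1/5}$, for which $L^{-5/2}a^{-1/2}=\widetilde{C}^{-5/2}$ is a constant independent of $a$; taking $\widetilde{C}$ large makes this constant small enough that the aggregate residual is at most half the positive leading term, leaving a coercivity constant bounded away from zero uniformly as $a=\tfrac1M\to 0$. This simultaneously identifies $L=\widetilde{C}a^{-1/5}=\widetilde{C}M^{1/5}$ as the sharp (narrow) blending width.

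The main obstacle is not the shell summation—being a fixed $N$-term sum it is harmless—but establishing \emph{uniformly in $k$} that the general-$k$ residual in $T_2$ is genuinely of order $L^{-5/2}a^{-1/2}$ rather than some worse power. Here I must verify that the telescoped $\beta$-increments in \eqref{beta_genN_eq1} never raise more than three discrete derivatives of $\beta$ and that each shift $r=j-s$ remains bounded by the fixed $N$, so that the worst scaling matches exactly the $k=2$ case treated in Lemma~\ref{lemma:errorbound} and Theorem~\ref{theorem:blendingregion}. Confirming this uniform order, together with tracking the discrete Poincar\'e and inverse-inequality constants so that the critical balance $L^{-5/2}a^{-1/2}\lesssim 1$ reproduces the stated width, is the crux of the argument; the remaining manipulations are routine bookkeeping.
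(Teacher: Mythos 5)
Your proposal follows the paper's own proof essentially verbatim: the paper likewise decomposes the bilinear form by interaction shells, establishes the per-shell bound \eqref{bqcf_bilinear_genN_eq2} via the $T_1$/$T_2$ split with the symmetrization in $j,s$ and the machinery of Lemma~\ref{lemma:errorbound}, sums the finitely many shells, and chooses $L=\widetilde{C}a^{-1/5}$ so that $L^{-5/2}a^{-1/2}$ becomes a constant that the positive atomistic coefficient absorbs. One minor observation: the stability condition your argument actually uses, $\sum_{k=1}^{N}k^{2}\phi_{xx}(k)>0$, is what the summed estimate requires, whereas the theorem as printed assumes the weaker $\phi_{xx}(1)+\sum_{k=1}^{N}k^{2}\phi_{xx}(k)>0$; this same discrepancy appears in the paper's own derivation, so your reading (treating the two as the natural generalization of $\phi_{xx}(1)+4\phi_{xx}(2)>0$) is the correct one.
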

Meanwhile, we can see that the bounds are dependent on the smoothness of $\beta$. Therefore, we aim to find the optimal types of blending function to preserve the bounds in the coming numerical session.
\begin{remark}Throughout, the term `optimal' is used to describe the conclusion that $L=\widetilde{C}a^{-\frac{1}{5}}$ is the optimal blending size for keeping coercivity of the force operator when $a=\frac{1}{M}\rightarrow 0$. Optimality in these instances describes the smallest asymptotic order that could be expected to ensure stability. However, the analysis of consistency conditions is beyond the purview of this paper.  Note that $\widehat{C}$ is dependent on the blending function, $\beta$, and the choice for potential energy, $\phi$; and it is not on the lattice spacing constant, $a$.

Also notice from the inequality \eqref{bqcf_bilinear_genN_eq2} that
$\widehat{C}_k L^{-\frac{5}{2}}a^{-\frac{1}{2}}$ dominates the other terms only when the lattice spacing $a=\frac{1}{M}$ is very small. As a result, the asymptotically rate $L\sim a^{-\frac{1}{5}}= M^{\frac{1}{5}}$ might be not observed when the size of atomistic chain is only moderately large. In this case, we suggest to take $L\sim a^{-\frac{1}{3}}=M^{\frac{1}{3}}$, which is observed in the numerical test.
\end{remark}

\section{Numerical Experiments}
We conduct numerical experiments to verify the theoretical findings from the stability analysis.

\subsection{Blending Size and the Stability Constant}
We consider a periodic chain with atom indices from $-M+1$ to $M$. For the following numerical simulations, we set $M=2000$. The optimal blending size as was found analytically in the previous section is $M^{1/5} \approx 5$.  We will test to see if the numerical experiments coincide with this value.

First, we apply the uniform stretch to the atomistic chain and compute the critical strain when $F^{a, lin}$ and $F^{bqcf, lin}$ loses the coercivity. We compare the critical strain values between the atomistic model and blending models with different blending sizes and various types of blending functions. By comparing these values to the atomistic critical strain error, we obtain the optimal blending function and attempt to verify the optimal blending size of $M^{\frac{1}{5}}$ found before.

In numerical experiments, the Morse Potential,
\begin{equation*}
    \phi(r)=D_e\times [1-e^{-\alpha(r-r_e)}]^2,
\end{equation*}
was utilized for the interaction potential due to its popularity in applications. We use the values $D_e=3$ 
and $\alpha=3, 4,$and $ 5$, respectively. Recall from Figure \ref{fig:1D_morsePot}, the local minimal value is set to be $\phi(1)$ and the local height of the potential is $D_e$. Also, as $\alpha$ grows larger, the more narrow the potential becomes.

We will consider our computational domain as $\Omega=(-1,1]$ with periodic boundary conditions. 
The computational domain will be decomposed into several sub-domains following \cite{seleson2013aa}. An interaction range is introduced to serve as a buffer region to simplify the treatment of periodic boundary conditions. 
The lattice spacing constant that was used had a value of $a=\frac{1}{M}$ which helped to start the blending region.

For the numerical experiments, we denote the blending region, $\Omega^{b}=(b_{1},b_{2})$ for some $b_{1}, b_{2} \in \Omega$. The numerical blending size, $L$, will be defined as $L=b_{2}-b_{1}$. We can compare the numerical blending size to that which was found in the stability analysis.

Recall from Definition \eqref{blendingfunction},
\begin{equation*}\label{blending}
    \beta(x)=\begin{cases}
    1, \quad &x \in \Omega^{a} \\
    0, \quad &x \in \Omega^{c} \\
    \in(0,1), \quad &x \in \Omega^{b}=(b_{1},b_{2}).
    \end{cases}
\end{equation*}

We conduct numerical experiments using a piecewise linear spline, piecewise cubic spline, and piecewise quintic spline blending function which are defined as follows:
\begin{equation*}\label{def_linear}
    \beta^{linear}(x):=\begin{cases}
    1, \quad &x \in \Omega^{a}, \\
    0, \quad &x \in \Omega^{c}, \\
    1-\frac{x-b_{1}}{L}, &x \in \Omega^{b},
    \end{cases}
\end{equation*}
and
\begin{equation*}\label{def_cubic}
    \beta^{cubic}(x):=\begin{cases}
    1, \quad &x \in \Omega^{a}, \\
    0, \quad &x \in \Omega^{c}, \\
    1+2(\frac{x-b_{1}}{L})^{3}-3(\frac{x-b_{1}}{L})^{2}, &x \in \Omega^{b},
    \end{cases}
\end{equation*}
and
\begin{equation*}\label{def_quintic}
    \beta^{quintic}(x):=\begin{cases}
    1, \quad &x \in \Omega^{a}, \\
    0, \quad &x \in \Omega^{c}, \\
    1-6(\frac{x-b_{1}}{L})^{5}+15(\frac{x-b_{1}}{L})^{4}-10(\frac{x-b_{1}}{L})^{3} &x \in \Omega^{b}.
    \end{cases}
\end{equation*}
Graphical demonstrations of these various blending functions can be found in Figure~\ref{fig:1D_blendFnc}.

\begin{figure}[htp!]
    \centering
    \includegraphics[width=0.47\textwidth]{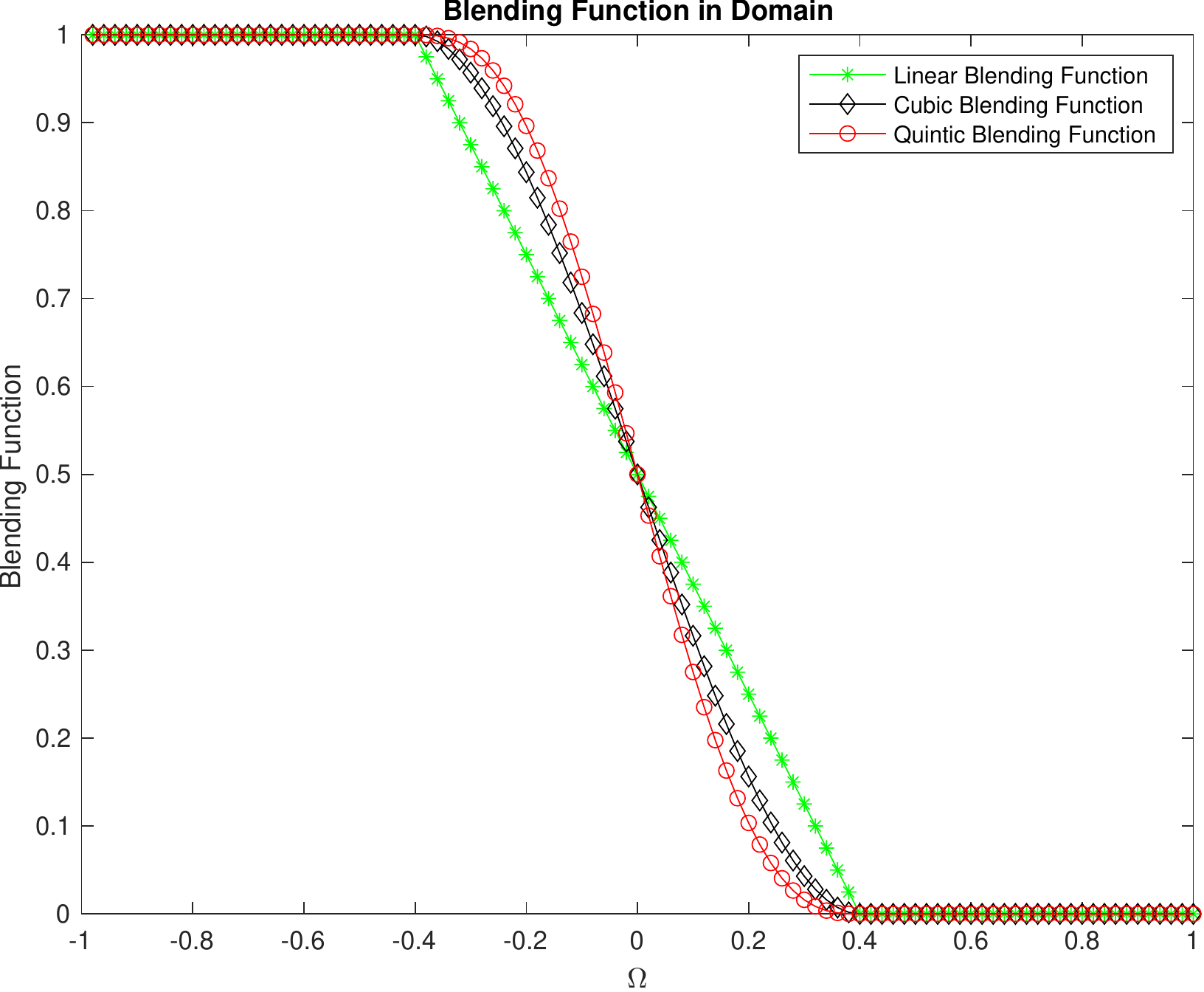}
    \caption{Pictoral representation of the blending functions used in numerical experiments. Recall, at $\beta=1$, the purely atomistic model is obtained and at $\beta=0$, the purely continuum model is obtained.}
    \label{fig:1D_blendFnc}
\end{figure}

 We apply a uniform stretch to the atomistic chain. From this, we numerically compute the critical strains of the atomistic model and compare this to the coupling model with different blending sizes in pursuit of the critical stretch value that makes the atomistic chain unstable. The step size for increasing $\gamma$ is $\Delta \gamma=10^{-5}$. We also model the different values of $\alpha$ in the Morse Potential using the cubic blending function. As can be seen in Table 1, the cubic blending function reaches the atomistic critical stretch value quicker than the other two blending functions.

 \begin{figure}[htp!]
    \centering
    \subfigure{\includegraphics[ width=0.45\textwidth]{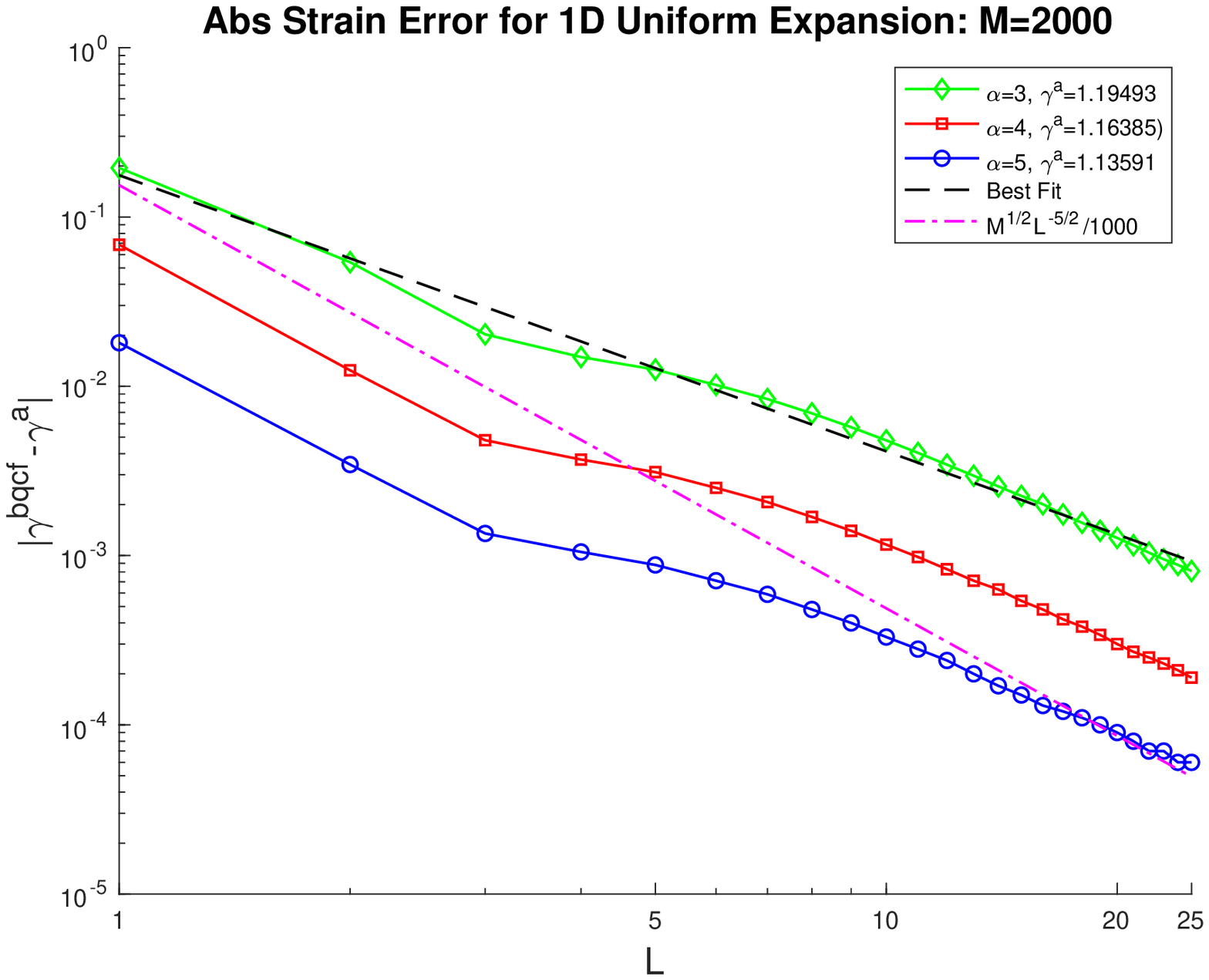}}
    \subfigure{\includegraphics[width=0.45\textwidth]{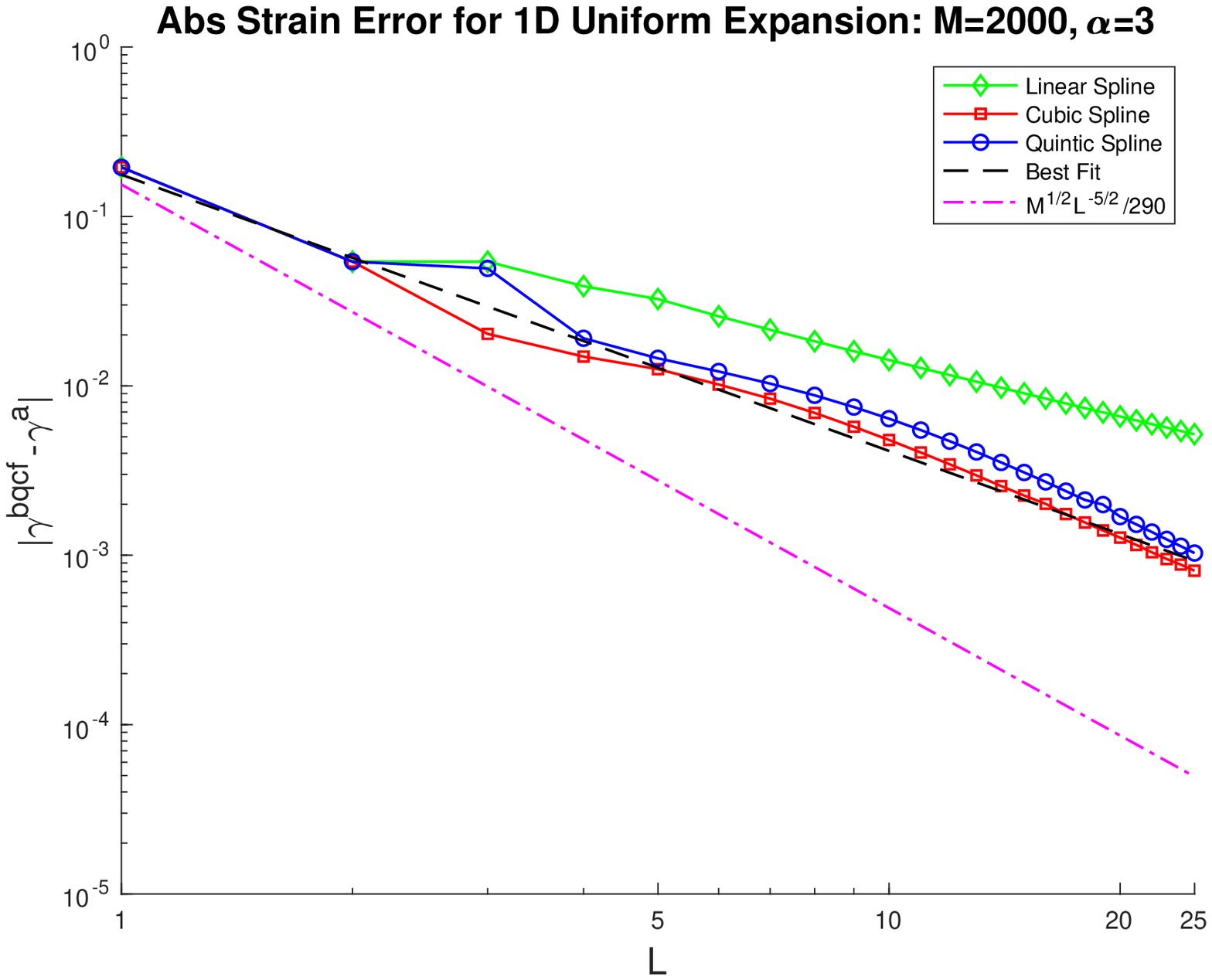}}
    \caption{The absolute critical strain errors are plotted for the 1D uniform stretching. We set $M$=2,000 and $\gamma^{a}$ and $\gamma^{bqcf}$ are the critical strains for the atomistic and B-QCF models, respectively. a) Models the cubic spline blending for various values of $\alpha$; and b) models the critical strain errors of linear, cubic, and quintic blending functions with $M=2,000$ and $\alpha=3$. }
    \label{fig:Error}
\end{figure}

\begin{table}[htp!]
            \centering
            \begin{tabular}{c|c|c|c}
 Blend size  & Linear & Cubic & Quintic  \\
 \hline
                 1 & 1&1 &1\\
 2 & 1.1400 & 1.1409 & 1.1409\\
 3 & 1.1269 & 1.1747 & 1.1456\\
 4 & 1.1562 & 1.1801 & 1.1759\\
 5 & 1.1624 & 1.1824 & 1.1804\\
 6 & 1.1692 & 1.1848 & 1.1828\\
 7 & 1.1735 & 1.1866 & 1.1847\\
 10 &1.1811 &1.1950 & 1.1950 \\
 \hline
            \end{tabular}
            \caption{Shown are the critical stretching values for linear, cubic, and quintic blending models for a blending size from $a$ to $10a$. The critical value for the purely atomistic model was found to be $\gamma^a=1.195$. The numerical increment for $\Delta \gamma$ is $10^{-5}$. }
            \label{tab:my_label}
        \end{table}

The results from Table 1 suggest the blending size to be $L \approx M^{\frac{1}{3}}$, this might be due to the other terms in the inequality \eqref{bqcf_bilinear_genN_eq2} when observing only a moderately large atomistic chain. \elaine{For further clarification, see Remark \ref{remark:inequalities}.} 

The results from the numerical experiments find the cubic blending function as that which converges quickest toward the atomistic strain value and is thus the optimal blending function from those we tested. 
\elaine{
\begin{remark}\label{remark:inequalities}
Recall from \eqref{inequalities}, the assumption that:
\begin{equation*}
    \left(2c_{3}L^{-\frac{5}{2}}a^{-\frac{1}{2}}+4c_{2}L^{-2}+c_{1}L^{-1}\right)\leq \left(c_{4}L^{-\frac{5}{2}}a^{-\frac{1}{2}}\right).
\end{equation*}
The latter two terms on the left side of the inequality would not necessarily be negligible if the number of atoms $M$ were not large enough. This accounts for the difference observed in the blending size between the analysis and the numerical simulation.

It must be noted that the analysis conducted in the previous section only applies to the cubic or quintic blending function used in these simulations. Due to less regularities near the boundaries of the blending region, the analysis does not encompass the linear blending function. In these experiments, we can also see that the linear blending leads to the most discrepancies. 

\end{remark} }

\subsection{Simulation of Deformed Configuration}
Now that we found that the cubic blending function as the optimal blending function, we utilize this for the remaining numerical tests. We use a blend size \elaine{$L=5$ since $2000^{\frac{1}{5}}\approx4.57$} as well as $\alpha=3$ for both numerical experiments. Next, we test two functions with periodic boundary conditions as the external force of the system to ensure the blended coupling scheme performs as imagined.
\begin{itemize}
\item {\bf First, we use a sinusoidal external force}
\begin{equation*}
    F^{ext}_{\ell\elaine{,}}=0.01 a \times \sin(-x_{\ell} \times \pi).
\end{equation*}
We use $a=\frac{1}{5}$ numerically and incorporate $-1$ into sine because of our left domain boundary.

 We obtain the expected force plot for our domain as can be seen in \ref{fig:SineExtForce}. The displacement is also observed for an interaction range potential from first neighbor up to third neighbor. We observe that the difference between an interaction range potential of $N=2$ versus an interaction range potential of $N=3$ is much smaller than the difference between the change in displacement for the interaction range potential for $N=1$ and $N=2$. \elaine{Recall, the features of the Morse Potential are such that $\phi_{xx}(1)>0$ and $\phi_{xx}(k)\leq0$ for $k\geq2$. Thus, after the next-nearest neighbor, $k=2$, the change in displacement will not differ by as much.}

\begin{figure}[htp!]
    \centering
    \mbox{\subfigure{\includegraphics[width=0.45\textwidth]{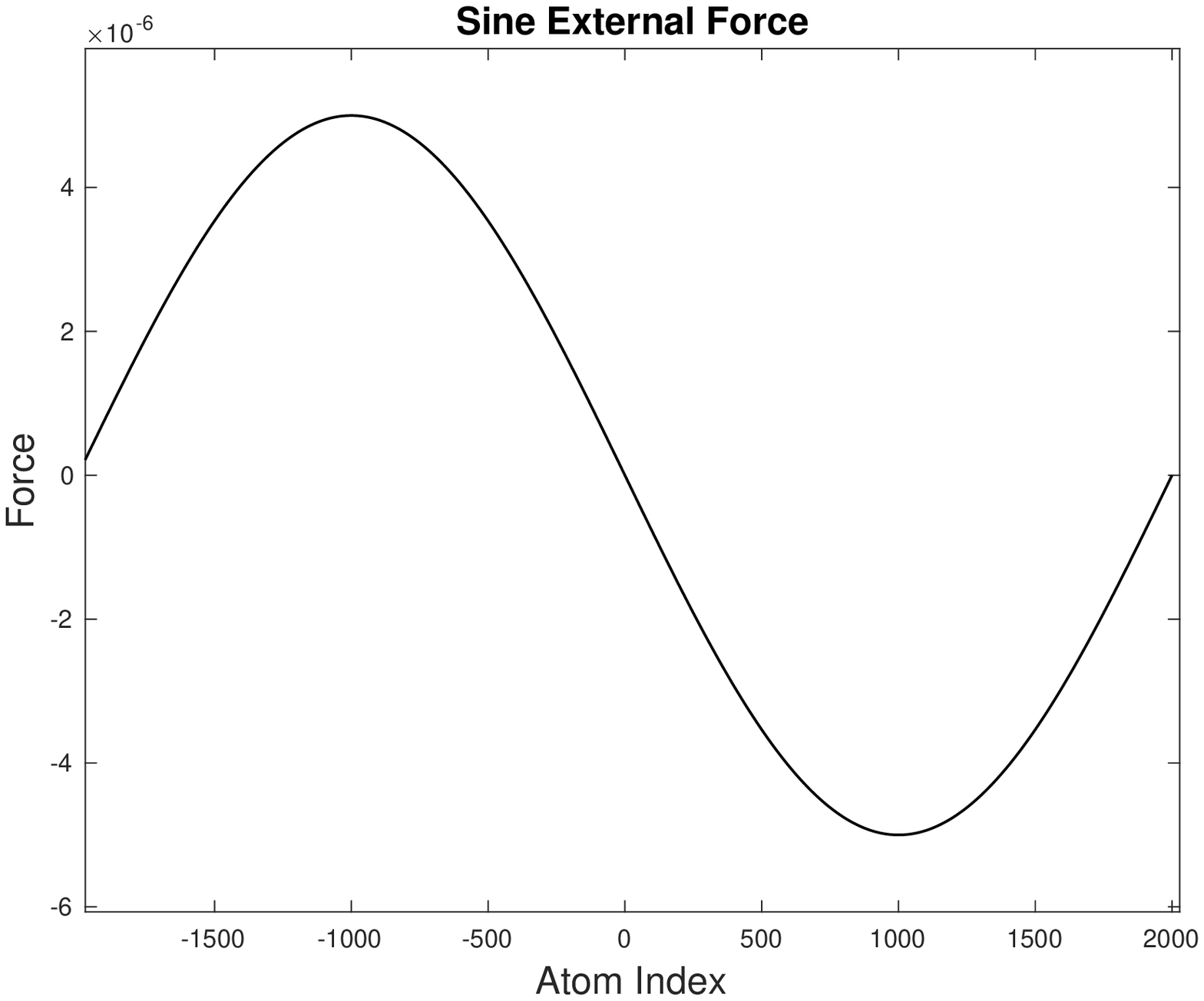}}\
    \subfigure{\includegraphics[width=0.45\textwidth]{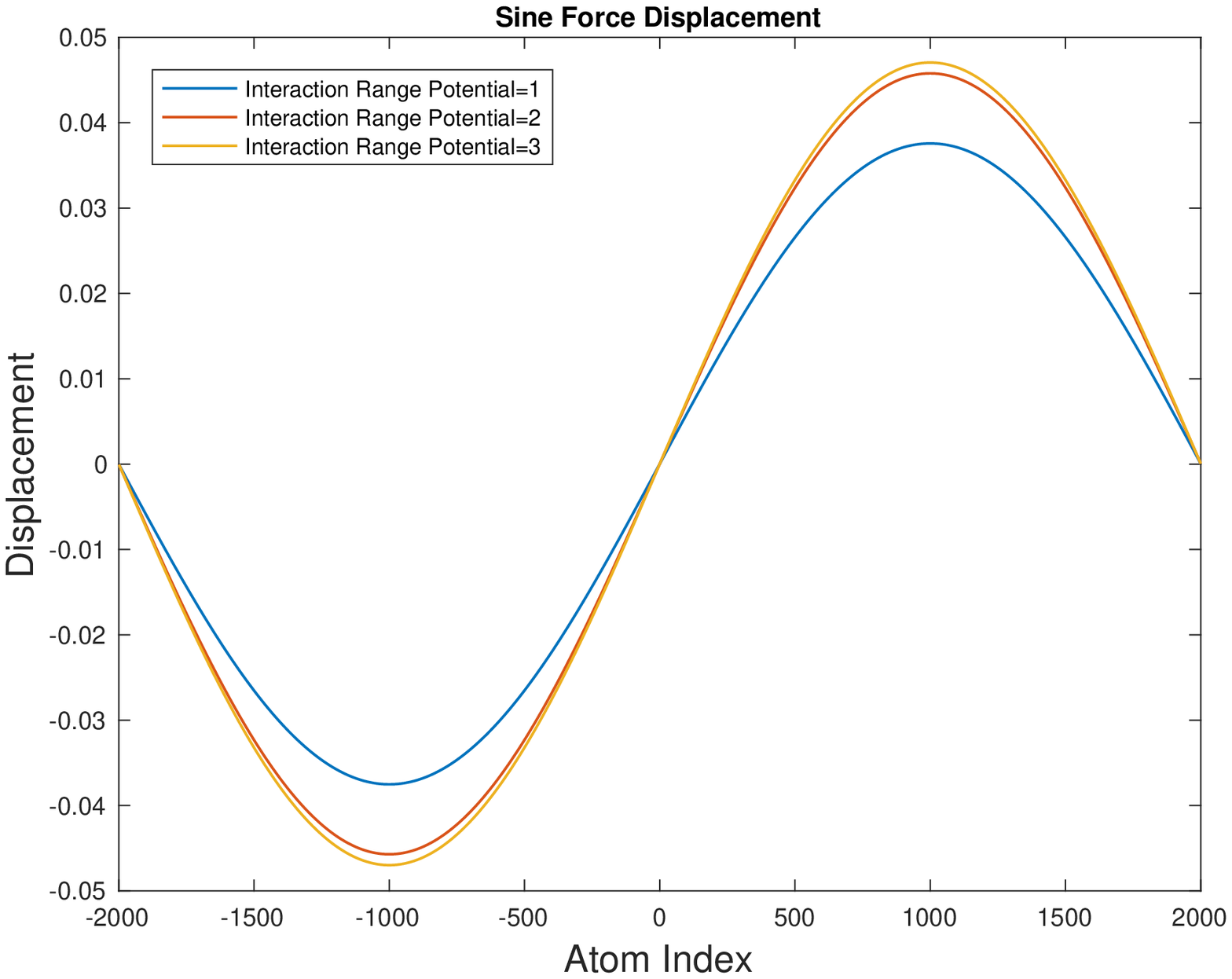}}}
    \caption{a) A sinusoidal external force is shown. b) The various displacements for this external force are displayed within the domain.}
    \label{fig:SineExtForce}
\end{figure}

\item{ \bf Next, we test a Gaussian external force:}
\begin{equation*}
    F^{ext}_{\ell\elaine{,}}=0.01\times a \times e^{\frac{-(x_{\ell}-\mu)^2}{2 \sigma^2}},
\end{equation*}
where $\mu=4a$, $\sigma=50a$, and $a=1/M$ with $M=2000$ was used.

Again, we show the force output for our domain and show the various displacements for three interaction range potentials. Similarly to the sinusoidal external force, once the interaction range reaches a value of $N=2$, the change in displacement becomes less significant.

\begin{figure}[htp!]
    \centering
    \mbox{\subfigure{\includegraphics[width=0.45\textwidth]{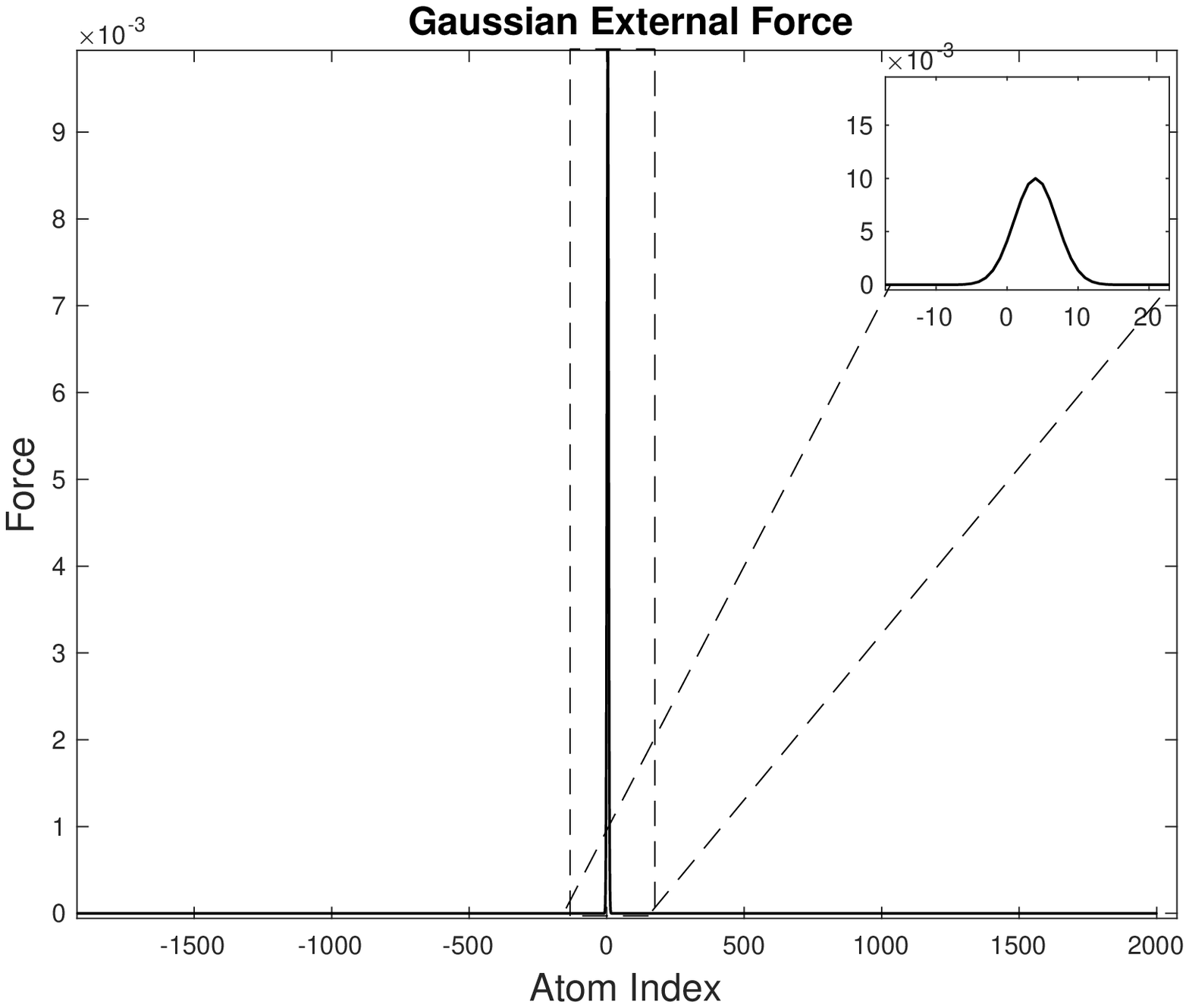}}\
    \subfigure{\includegraphics[width=0.45\textwidth]{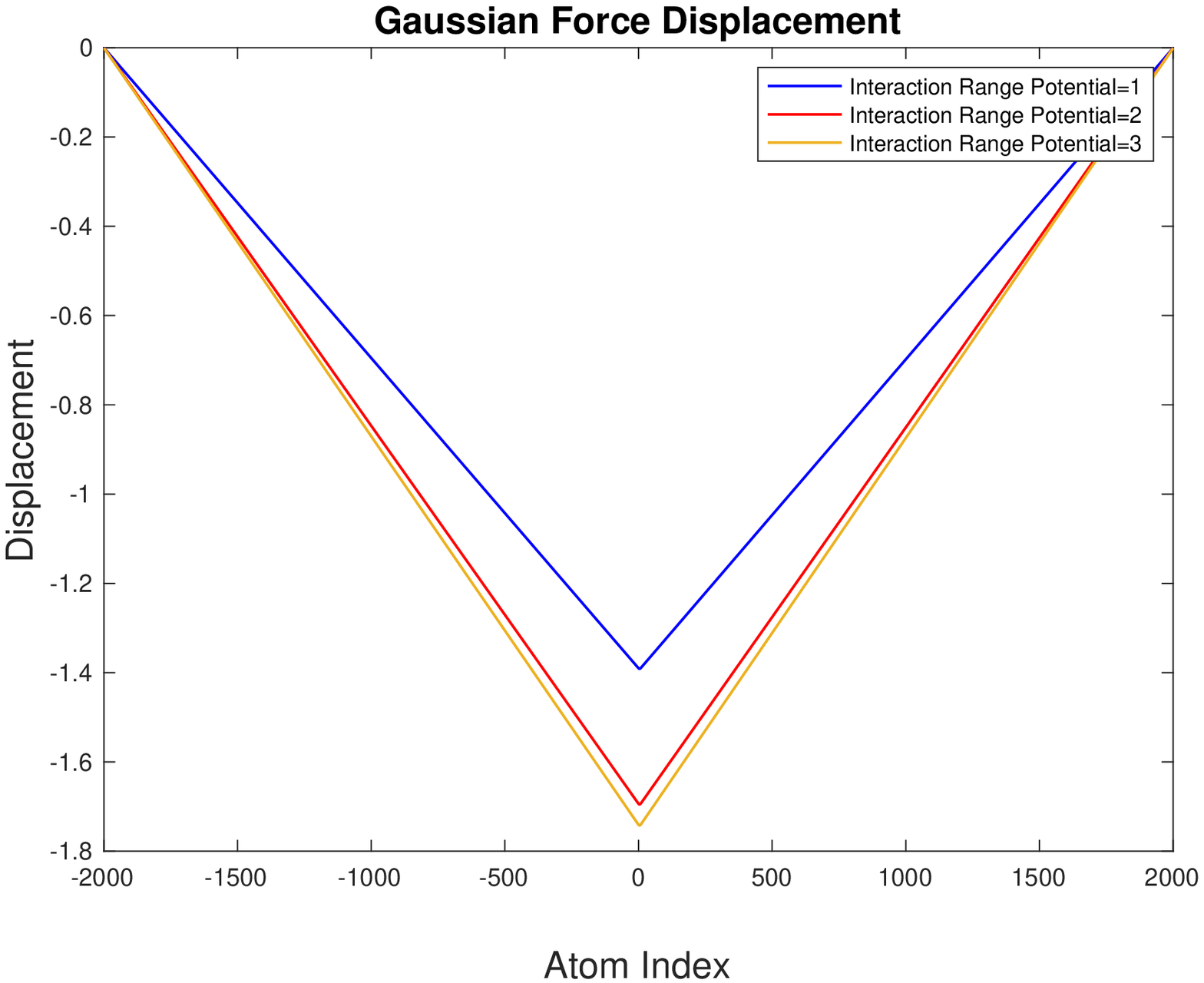}}}
    \caption{a) A Gaussian external force is shown. b) The various displacement for this external force are displayed within the domain.}
    \label{fig:GaussianExtForce}
\end{figure}

\end{itemize}
\section{Conclusion}
In this paper, inspired by the force-based coupling of the peridynamics model of \cite{seleson2013aa}, we have formulated a similar symmetric and consistent blended force-based atomistic-to-continuum coupling scheme in one-dimensional space. We were able to identify the optimal asymptotic conditions on the width of the blending region,  $L \approx M^{\frac{1}{5}}$ to ensure the $H^1$ stability of the linearized force-blending operator when chain size is huge.

We have verified the theoretical findings with numerical experiments on the blending function and the blending region. From these numerical experiments, we find that the cubic blending provides the best results compared to the critical stretch of the fully atomistic model. We also find that the optimal blending width from these numerical experiments is $L \approx M^{\frac{1}{3}}$ due to non-negligible terms when {\it not working with} a large enough atomistic chain.

In the future, extension of this scheme to two-dimensional atomistic-to-continuum coupling with a triangular crystal lattice in regards to the neighbors will be pursued.

\section{Acknowledgements}
Elaine Gorom-Alexander and Dr. X. Li are supported by  NSF CAREER award: DMS-1847770 and the University of North Carolina at Charlotte Faculty Research Grant.

We  would like to thank the helpful discussions from Dr. Pablo Seleson and Dr. Christoph Ortner. We also want to thank the opportunity provided by the 50th John H. Barrett Memorial Lectures organizing committee.

\section{Appendix}\label{appendix}
\subsection{\elaine{A More Rigorous Proof for Proposition \ref{proposition:forceconsistency}, A Consistency Analysis of Force}}\label{RigorProof}

\begin{proof}
Comparing $F^{c,lin}$ and $F^{a, lin}$, we have

\begin{equation*}
        F_{\ell,}^{c,lin}(u)-F_{\ell,}^{a,lin}(u)=-\sum_{\substack{k=-N,\\k\neq{0}}}^{N}\frac{1}{2}\phi_{xx}(k)\left(k^{2}u_{\ell}^{''}\right)+\sum_{\substack{k=-N, \\k \neq{0}}}^{N}\frac{1}{2}\phi_{xx}(k)\left(\frac{u_{\ell+k}-2u_{\ell}+u_{\ell-k}}{a^2}\right).
\end{equation*}
Let $N \in \mathbb{N}$ be fixed. For any $k=-N,\dots,N$ and $k\neq 0$, we apply the Taylor expansion to $\widetilde{u}_{\ell+k}={u}_{\ell+k}$ and $\widetilde{u}_{\ell-k}={u}_{\ell-k}$ around $\ell$. \elaine{We compare the differencing notation used to define the discrete displacement field with choosing a smooth spline interpolation.} We proceed with defining $\widetilde{u}_{x}$ as this smooth interpolation of the discrete displacement field $u$ in order to compute this approximation.
\begin{equation*}
    \begin{split}
        u_{\ell+k}&= u_{\ell}+ka\widetilde{u}_{x}+\frac{1}{2}(ka)^2\widetilde{u}_{xx}+\frac{1}{6}(ka)^3\widetilde{u}_{xxx}+O(a^4), \\
        u_{\ell-k}&= u_{\ell}-ka\widetilde{u}_{x}+\frac{1}{2}(ka)^2\widetilde{u}_{xx}-\frac{1}{6}(ka)^3\widetilde{u}_{xxx}+O(a^4).
    \end{split}
\end{equation*}

Thus, for all $k$,
\begin{equation}\label{Expand}
    \begin{split}
        u_{\ell+k}-2u_{\ell}+u_{\ell-k}&=u_{\ell}+ka\widetilde{u}_{x}+\frac{1}{2}(ka)^2\widetilde{u}_{xx}+\frac{1}{6}(ka)^3\widetilde{u}_{xxx}+O(a^4)-2u_{\ell} \\
        &+u_{\ell}-ka\widetilde{u}_{x}+\frac{1}{2}(ka)^2\widetilde{u}_{xx}-\frac{1}{6}(ka)^3\widetilde{u}_{xxx}+O(a^4) \\
        &=(ka)^{2}\widetilde{u}_{xx}+O(a^{4}).
    \end{split}
\end{equation}

\noindent Utilizing \eqref{Expand} for the atomistic and continuous force equations, the consistency analysis yields:
\begin{equation*}
    \begin{split}
        F_{\ell,}^{c,lin}&(u)-F_{\ell,}^{a,lin}(u)\\&=
        -\sum_{\substack{k=-N,\\k\neq{0}}}^{N}\frac{1}{2}\phi_{xx}(k)\left(k^{2}u_{\ell}^{''}\right)+\sum_{\substack{k=-N, \\k \neq{0}}}^{N}\frac{1}{2}\phi_{xx}(k)\left(\frac{u_{\ell+k}-2u_{\ell}+u_{\ell-k}}{a^2}\right)
        \\
        &=-\sum_{\substack{k=-N,\\k\neq{0}}}^{N}\frac{1}{2}k^{2}\phi_{xx}(k)\left(\frac{u_{\ell+1}-2u_{\ell}+u_{\ell-1}}{a^{2}}\right)
        +\sum_{\substack{k=-N, \\k \neq{0}}}^{N}\frac{1}{2}\phi_{xx}(k)\left(\frac{u_{\ell+k}-2u_{\ell}+u_{\ell-k}}{a^2}\right) \\
        &= -\sum_{\substack{k=-N,\\k\neq{0}}}^{N}\frac{1}{2}k^{2}\phi_{xx}(k)\frac{a^{2}\widetilde{u}_{xx}+O(a^{4})}{a^{2}}+\sum_{\substack{k=-N, \\k \neq{0}}}^{N}\frac{1}{2}k^{2}\phi_{xx}(k)\frac{(ka)^{2}\widetilde{u}_{xx}+O(a^{4})}{a^{2}}\\
        &=-\sum_{\substack{k=-N,\\ k\neq{0}}}^{N}\frac{1}{2} k^{2}\phi_{xx}(k)\widetilde{u}_{xx}+\sum_{\substack{k=-N, \\k \neq{0}}}^{N}\frac{1}{2}k^{2}\phi_{xx}(k)\widetilde{u}_{xx}+O(a^2)\\
    &= O(a^2).
    \end{split}
\end{equation*}
Thus, the consistency error between the linearized atomistic force equation and the continuum force equation is $O(a^{2})$.
\end{proof}

\subsection{Analysis in the continuous setting}
Before finding the nearest neighbor and the next-nearest neighbor interaction for the discrete case, the continuous case was observed. The continuous case was meant to shed light on the nature of the discrete case as it would be easier to find.

From \eqref{def_linear_bqcf} we look at the next-nearest neighbor interaction. Also, we will approximate
$\frac{\beta_{\ell+k}+2\beta_{\ell}+\beta_{\ell-k}}{4}\approx\beta(x_{\ell})$.
Thus, \eqref{def_linear_bqcf} becomes for the force-based operator:
\begin{equation*}
\begin{split}
    F_{\elaine{\ell,}}^{bqcf,lin}
    &=\left(\frac{\beta_{\ell-1}+2\beta_{\ell}+\beta_{\ell+1}}{4}\right)\phi_{xx}(1)\left(\frac{u_{\ell+1}-2u_{\ell}+u_{\ell-1}}{a^{2}}\right)+\left(1-\frac{\beta_{\ell-1}+2\beta_{\ell}+\beta_{\ell+1}}{4}\right)\phi_{xx}(1)u_{\ell}^{''} \\
    &+\left(\frac{\beta_{\ell-2}+2\beta_{\ell}+\beta_{\ell+2}}{4}\right)\phi_{xx}(2)\left(\frac{u_{\ell+2}-2u_{\ell}+u_{\ell-2}}{a^{2}}\right)+\left(1-\frac{\beta_{\ell-2}+2\beta_{\ell}+\beta_{\ell+2}}{4}\right)\phi_{xx}(2)4u_{\ell}^{''} \\
    &\approx \beta(x_{\ell})\left(\phi_{xx}(1)u_{\ell}^{''}+\phi_{xx}(2)\left(\frac{u_{\ell+2}-2u_{\ell}+u_{\ell-2}}{a^{2}}\right)\right)+\left(1-\beta(x_{\ell})\right)\left(\phi_{xx}(1)u_{\ell}^{''}+4\phi_{xx}(2)u_{\ell}^{''}\right).
    \end{split}
\end{equation*}

Using a Taylor approximation on $u_{\ell+2}$ and $u_{\ell-2}$, the next-nearest neighbor operator becomes
\begin{equation}
    F_{\elaine{\ell,}}^{bqcf,lin}=\phi_{xx}(1)u_{\ell}^{''}+\beta(x_{\ell})\phi_{xx}(2)\left(4u_{\ell}^{''}+\frac{4}{3}u_{\ell}^{(4)}a^{2}\right)+\left(1-\beta(x_{\ell})\right)4\phi_{xx}(2)u_{\ell}^{''}.
\end{equation}
Since the nearest neighbor interaction is not difficult to find, we drop it from the continuous case to find the approximation for the next-nearest neighbor as well as utilize the fact that $\phi$ is a Lennard Jones type potential. Also, we denote $\beta(x_{\ell})$ by $\beta$ when there is no ambiguity. Therefore, the continuous next-nearest operator becomes
\begin{equation}
\begin{split}
    Fu_{x}&=\beta(-u_{xx}+a^{2}Au_{xxxx})+(1-\beta)(-u_{xx}) \\
    &=\beta a^{2}Au_{xxxx}-u_{xx}
    \end{split}
\end{equation}
where $A=\frac{4}{3}$.

\begin{lemma} \label{Appendix_lmma}For any displacements $u=(u_{\ell})_{\ell \in \mathbb{Z}}$ from $y_{\ell}$, the nearest neighbor and the next-nearest neighbor interaction operator can be written in the form
\begin{equation}
    \begin{split}
    \label{eqn: A.1}
        &<F_{\elaine{,}1}^{bqcf,lin},u>= \norm{u_{x}}^{2} \\
        &<F_{\elaine{,}2}^{bqcf,lin},u>= 4\norm{u_{x}}^{2}+a^{2}16A\norm{\sqrt{\beta}u_{xx}}^{2}+R+S.
    \end{split}
\end{equation}
where $R$ and $S$ are given by:
\begin{equation}
    \label{eqn: A.2}
        R=-a^{2}A\int\beta_{xx}(u_{x})^{2}dx,  \quad
        S=a^{2}A\int\beta_{xx}(u_{xx}u)dx.
\end{equation}
\end{lemma}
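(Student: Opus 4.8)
The plan is to establish each identity in \eqref{eqn: A.1} by pairing the reduced continuous operator with $u$ in the $L^2$ inner product and then integrating by parts repeatedly, discarding every boundary term by periodicity. This is the continuous counterpart of the discrete calculation in Lemma~\ref{lemma:operators}, with integration by parts playing the role of the summation-by-parts identity of Lemma~\ref{lemma:summation}.

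The nearest-neighbor identity is immediate: since the reduced nearest-neighbor operator is $-u_{xx}$, a single integration by parts gives $\<F_{,1}^{bqcf,lin},u\> = -\int u_{xx}\,u\dx = \int (u_x)^2\dx = \norm{u_x}^2$, the boundary term dropping out by periodicity. This settles the first line of \eqref{eqn: A.1}.

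For the next-nearest-neighbor operator I would start from the reduced form $\beta a^2 A u_{xxxx} - u_{xx}$ derived above, so that $\<F_{,2}^{bqcf,lin},u\>$ splits into a second-order piece and a fourth-order piece. The second-order piece integrates by parts once to yield the $\norm{u_x}^2$ term with the coefficient shown in the statement. The heart of the argument is the fourth-order piece $a^2 A\int \beta\, u_{xxxx}\,u\dx$: I would integrate by parts twice so that two derivatives are transferred from $u_{xxxx}$ onto the product $\beta u$. After the first step the integrand is $-(\beta u)_x\,u_{xxx}$, and after the second it is $(\beta u)_{xx}\,u_{xx} = (\beta_{xx}u + 2\beta_x u_x + \beta u_{xx})\,u_{xx}$, with all boundary contributions again annihilated by periodicity. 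The pure term $\int \beta (u_{xx})^2\dx = \norm{\sqrt\beta\,u_{xx}}^2$ is the leading fourth-order contribution, appearing with the $a^2$-coefficient recorded in \eqref{eqn: A.1}, while $a^2 A\int \beta_{xx}\,u\,u_{xx}\dx$ is exactly $S$.

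The only term requiring extra manipulation is the cross term $2a^2 A\int \beta_x u_x u_{xx}\dx$. Here I would use $2u_x u_{xx} = \big((u_x)^2\big)_x$ and integrate by parts a third time to get $-a^2 A\int \beta_{xx}(u_x)^2\dx$, which is precisely $R$; periodicity removes the boundary term once more. Collecting $\norm{u_x}^2$, $\norm{\sqrt\beta\,u_{xx}}^2$, $R$, and $S$ then reproduces the second line of \eqref{eqn: A.1}. The principal obstacle is not conceptual but bookkeeping: tracking signs and keeping straight which derivative falls on $\beta$ and which on $u$ through the three successive integrations by parts, and confirming that every periodic boundary term vanishes --- the clean continuous shadow of the lengthy $T_2$ and $T_{22}$ reductions carried out in Lemma~\ref{lemma:operators}.
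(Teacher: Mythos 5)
Your proposal is correct and takes essentially the same route as the paper's own proof: both pair the reduced operator $\beta a^{2}Au_{xxxx}-u_{xx}$ with $u$, integrate by parts repeatedly with every boundary term vanishing by periodicity, and extract $R$ and $S$ through the identity $2u_{x}u_{xx}=\left((u_{x})^{2}\right)_{x}$. One caveat: this computation (in the paper's proof as well as in yours) yields $\norm{u_{x}}^{2}+a^{2}A\norm{\sqrt{\beta}\,u_{xx}}^{2}+R+S$, i.e.\ coefficients $1$ and $A$ rather than the $4$ and $16A$ displayed in the lemma statement, so your remark that the ``coefficient shown in the statement'' emerges glosses over a discrepancy that is in fact an internal inconsistency of the paper itself.
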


\begin{proof} Since the proof of the first identity of Lemma ~\ref{Appendix_lmma} is straightforward and utilizes the same properties, the proof for the second-neighbor interaction operator will be given. The main tool used is integration by parts based on the periodic boundary conditions.
\begin{equation}
    \begin{split}
    \label{eqn: A.3}
    <F_{\elaine{,}2}^{bqcf}u,u>
    &= \int(\beta a^{2}Bu_{xxxx}-u_{xx})udx \\
    &=\int -u_{xx}udx+\int \beta a^2Au_{xxxx}udx \\
    &=\int(u_{x})^{2}dx+a^2A\int \beta uu_{xxxx}dx \\
    &=\norm{u_x}^2-a^2A\int (\beta u)_{x}u_{xxx}dx \\
       &=\norm{u_x}^2+a^2A\norm{\sqrt{\beta}u_{xx}}^{2}+a^{2}A\int\beta_{x}(u_{x})^{2}_{x}dx-a^2A\int\beta_{x}(u\,u_{xx})_{x}dx \\
    &=\norm{u_x}^2+a^2A\norm{\sqrt{\beta}u_{xx}}^{2}-a^{2}A\int\beta_{xx}(u_{x})^{2}dx+a^{2}A\int\beta_{xx}(u_{xx}u)dx \\
    &=\norm{u_x}^2+a^2A\norm{\sqrt{\beta}u_{xx}}^{2}+R+S.
    \end{split}
\end{equation}

Using the continuous analysis as a road-map, we thus derive the discrete analysis in Section 3.
\end{proof}

\newpage
\subsection{Symbols and Notation}
\begin{center}
    \begin{tabular}{c|c}
    \hline
    \elaine{superscript e.g., $^{'}$, $^{''}$  }   & first order backward finite difference, second order central finite difference, etc. for the discrete case  \\
    \elaine{subscript e.g., $_{x}$, $_{xx}$} & first derivative, second derivative, etc. for the continuous case \\
    $\frac{\delta}{\delta u_{\ell}}$ & first order variation evaluated at $u_{\ell}$ \\
    $\frac{d}{dx}$ & derivative evaluated at $x$ \\
    $\norm{\boldsymbol{\cdot}}_{\ell_{2}}$     & $\ell_2$-norm \\
    $\norm{\boldsymbol{\cdot}}_{\ell_{\infty}}$ & $\infty$ -norm \\
    $\norm{\boldsymbol{\cdot}}_{\ell_{2}(\Omega^{b})}$ & $2$-norm evaluated on the blending region $\Omega_b$\\
    $|\boldsymbol{\cdot}|_{H^{1}}$ & discrete $H^{1}$ semi-norm \\
    $<\boldsymbol{\cdot},\boldsymbol{\cdot}>$ & inner product \\
    $\phi$ & atomistic interaction potential per unit cell \\
    $(\boldsymbol{\cdot})^{c}$ & continuum equation \\
    $(\boldsymbol{\cdot})^{a}$ & atomistic equation \\
    $(\boldsymbol{\cdot})^{lin}$ & linearized equation \\
    $(\boldsymbol{\cdot})^{bqcf}$ & force-based blended equation \\
    $\Omega$ & whole domain \\
    $\Omega^{a}$ & atomistic domain \\
    $\Omega^{c}$ & continuum domain \\
    $\Omega^{b}$ & blending domain  \\
    $\beta$ & blending function \\
    $L$ & number of atoms within the blending region $\Omega_b$ \\
    $a$ & lattice spacing constant \\
    $y_{\ell}$ & deformation at $\ell$\\
    $u_{\ell}$ & displacement at $\ell$\\
    $E$ & energy equation\\
    $F_{\ell,k}$ & force equation at atom $\ell$ with $k^{th}$ neighbor interaction\\
    $F_{\ell,}$ & force equation at atom $\ell$ with the summation of all neighbor interaction \\
    $F_{,k}$ & force equation with $k^{th}$ neighbor interaction with the summation of all atoms \\
    $\alpha$ & parameter in the Morse Potential\\
    $N$ & number of neighbors within the interaction range\\
    $M$ & half number of atoms in the atomistic chain \\
    $F^{ext}$. & external force
    \end{tabular}
\end{center}




\bibliographystyle{plain}
\bibliography{reference}
\end{document}